\newtheorem{theorem}{Theorem}[section]
\newtheorem{lemma}[theorem]{Lemma}
\newtheorem{question}{Question.}
\newtheorem{proof}{\textmd{\textit{Proof.}}}
\newtheorem{remark}[theorem]{Remark}
\newtheorem{acknowledgement}{\textmd{\textit{Acknowledgements.}}}
\newcommand{\qedd}{\hfill \Box}
\newcommand{\ve}{\varepsilon}
\newcommand{\lra}{\longrightarrow}
\newcommand{\wt}{\widetilde}
\newcommand{\ol}{\overline}
\newcommand{\B}{\ensuremath{\mathbb{B}}}
\newcommand{\R}{\ensuremath{\mathbb{R}}}
\newcommand{\Sph}{\ensuremath{\mathbb{S}}}
\newcommand{\cU}{\ensuremath{\mathcal{U}}}
\def\ra{\mathop{\mathrm{rank}}\nolimits}
\def\vol{\mathop{\mathrm{vol}}\nolimits}
\def\supp{\mathop{\mathrm{supp}}\nolimits}
\def\Cut{\mathop{\mathrm{Cut}}\nolimits}
\def\bL{\mathop{\mathrm{Lip^b}}\nolimits}
\def\Im{\mathop{\mathrm{Im}}\nolimits}
\def\Ker{\mathop{\mathrm{Ker}}\nolimits}
\title{Differentiable sphere theorems whose 
comparison\\[1mm]
spaces are standard spheres or exotic ones\footnote{
2010 Mathematics Subject Classification: Primary 
53C20, 57R55;
Secondary 49J52, 57R12.}
\footnote{Key words and phrases: bi-Lipschitz homeomorphism, the Blaschke conjecture for spheres, 
the Cartan--Ambrose--Hicks theorem, differentiable sphere theorem, exotic spheres, 
radial curvature.}}
\author{Kei Kondo\footnote{Department of Mathematical Sciences, Yamaguchi University, Yamaguchi City, Yamaguchi Pref. 753-8512, Japan. 
{\em email address:} {\tt keikondo@yamaguchi-u.ac.jp}} \, and \,Minoru Tanaka\footnote{Department of Mathematics, Tokai University, 
Hiratsuka City, Kanagawa Pref. 259-1292, Japan. 
{\em email address:} {\tt tanaka@tokai-u.jp}}
}
\date{\today}
\begin{document}
\maketitle

\begin{abstract}
We show that for an arbitrarily given closed Riemannian manifold $M$ admitting a point $p \in M$ 
with a single cut point, every closed Riemannian 
manifold $N$ admitting a point 
$q \in N$ with a single cut point is diffeomorphic to $M$ if the radial curvatures of $N$ at $q$ are sufficiently 
close in the sense of $L^1$-norm to those of 
$M$ at $p$. Our result hence not only 
produces a weak version of the Cartan--Ambrose--Hicks 
theorem in the case where underlying manifolds 
admit a point with a single cut point, but 
also is a kind of a weak version of 
the Blaschke conjecture for spheres proved 
by Berger. 
In particular that result generalizes one of theorems 
in Cheeger's Ph.D. Thesis in that case. 
Remark that every exotic sphere of dimension 
$> 4$ admits a metric such that 
there is a point whose cut locus consists 
of a single point.
\end{abstract}

\section{Introduction}\label{sec1}

In the global Riemannian geometry 
the relationship between curvatures and structures, especially topology, of Riemannian manifolds 
has been studied from various kinds of viewpoint, 
and a great number of results concerning with 
such a relation has been gotten. It is the topological 
$1/4$--pinching sphere theorem 
that is counted among the masterpieces of such results 
from the geodesic theory's standpoint, which states the following: 

\begin{theorem}{\rm (Rauch--Berger--Klingenberg)}\label{2019_01_01_thm1.1} 
If a compact simply connected Riemannian 
manifold $X$ admits a metric whose sectional 
curvature $K_X$ lies in $(1/4,1]$, 
then the manifold is homeomorphic to a sphere. 
\end{theorem}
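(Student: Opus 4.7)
The plan is to exhibit $X$ as the union of two closed topological disks glued along their boundary spheres, from which the homeomorphism to a sphere follows by a classical result of Brown on twisted spheres. The two disks will be metric balls of radius $\pi/2$ around a pair of points at near-maximal distance, and the work will be to show (a) that these balls are genuine embedded disks and (b) that they together cover $X$.

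The first step is to secure a lower bound $\inj(X) \geq \pi$ on the injectivity radius. Since $K_X > 1/4$ and $X$ is compact, Morse-theoretic arguments along closed geodesics, together with the curvature upper bound $K_X \leq 1$ that forces conjugate points to appear no earlier than $\pi$, yield Klingenberg's estimate; the simple-connectedness hypothesis is what allows the even-dimensional and odd-dimensional cases of Klingenberg's lemma to be handled uniformly. Once $\inj(X) \geq \pi$ is in hand, the exponential map $\exp_p : B(0,\pi) \subset T_pX \to B(p,\pi)$ is a diffeomorphism for every $p \in X$, so any open metric ball of radius less than $\pi$ is diffeomorphic to a Euclidean ball.

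The second step is to locate two points $p, q \in X$ with $d(p,q)$ close to $\pi$ and to verify the covering property $X = \overline{B(p,\pi/2)} \cup \overline{B(q,\pi/2)}$. Here one chooses $p,q$ to realize the diameter of $X$ and applies Toponogov's triangle comparison theorem against the model sphere of constant curvature $1/4$: given any third point $x \in X$ with $d(p,x) \geq \pi/2$ and $d(q,x) \geq \pi/2$, one considers a minimizing segment from $p$ to $q$ and uses the hinge and triangle versions of Toponogov's theorem, along with the strict pinching $K_X > 1/4$, to derive $d(p,q) > \mathrm{diam}(X)$, a contradiction. This is the step where the pinching constant $1/4$ is critical, and it is the place I expect the main difficulty: one must exploit the strict inequality $K_X > 1/4$ together with compactness to convert the non-strict comparison inequality into a strict one yielding the contradiction, rather than merely a degenerate equality case.

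With these two ingredients, each of $\overline{B(p,\pi/2)}$ and $\overline{B(q,\pi/2)}$ sits inside a larger ball of radius $< \pi$ and is therefore a smoothly embedded closed disk (as the image under $\exp_p$, respectively $\exp_q$, of a Euclidean closed ball), and their union is all of $X$. The final step is then to invoke Brown's theorem that a topological manifold obtained by gluing two closed $n$-disks along a homeomorphism of their boundary $(n-1)$-spheres is homeomorphic to $\Sph^n$, completing the proof that $X$ is homeomorphic to a sphere.
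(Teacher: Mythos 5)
First, a remark on scope: the paper does not prove Theorem~\ref{2019_01_01_thm1.1} at all; it is quoted as historical background, with attribution to Rauch, Berger and Klingenberg and with \cite{CE} as the standard reference for the proof. So there is no ``paper's own proof'' to compare against, and your proposal must be judged on its own.

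Your overall architecture — Klingenberg's estimate $\inj(X)\ge\pi$, a covering of $X$ by two embedded metric balls around a pair of points realizing the diameter, and then Brown's twisted-sphere theorem — is exactly the classical route in Cheeger--Ebin. The gap is in the covering step: closed balls of radius $\pi/2$ do \emph{not} cover $X$ in general, and the claimed Toponogov contradiction from $d(p,x)\ge\pi/2$ and $d(q,x)\ge\pi/2$ is simply false. Take $X$ to be the round sphere of constant curvature $\kappa=1/4+\varepsilon$ with $\varepsilon$ small; then $\inj(X)=\diam(X)=\pi/\sqrt{\kappa}$, which is close to $2\pi$, and an ``equatorial'' point $x$ at distance $\approx\pi$ from each of the two poles $p,q$ lies outside both $\overline{B(p,\pi/2)}$ and $\overline{B(q,\pi/2)}$, with no contradiction anywhere. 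Your claim that $d(p,q)$ is ``close to $\pi$'' is also unjustified: the only a priori bounds are $\pi\le\diam(X)\le\pi/\sqrt{\delta}<2\pi$ for $K_X\ge\delta>1/4$, so the diameter can be nearly $2\pi$. The correct covering lemma (Berger's lemma plus Toponogov's hinge comparison against the sphere of curvature $\delta$) shows that for every $x\in X$, $\min\{d(p,x),d(q,x)\}<\pi/(2\sqrt{\delta})$; the relevant radius is therefore $\pi/(2\sqrt{\delta})$, not $\pi/2$, and it tends to $\pi$ as $\delta\downarrow 1/4$. The point of the strict pinching $\delta>1/4$ is precisely to keep this radius strictly below $\pi=\inj(X)$, so that the covering balls are still embedded disks. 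Once the covering is established with this radius, the rest of your argument (each ball is diffeomorphic to a Euclidean ball via $\exp$, the complement of one ball is a closed disk inside the other, Brown's theorem) goes through as you describe.
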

This masterpiece was very first proved 
by Rauch \cite{Ra} in the case where $3/4< K_X \le 1$, 
and worked out by Berger \cite{Berg} 
and Klingenberg \cite{Klin} 
in the case where $1/4< K_X \le 1$. 
Note that the complex projective space 
admits a metric satisfying 
$1/4 \le K_X \le 1$ and is not homeomorphic 
to a sphere.\par 
Theorem \ref{2019_01_01_thm1.1} 
produced the $1/4$--pinching race as the problem 
if ``homeomorphic" in the statement 
could be replaced by ``diffeomorphic". 
There were a large number of entrant 
for the race, e.g., Gromoll \cite{Grom}, Calabi, 
Shikata \cite{Sh2}, Sugimoto--Shiohama--Karcher \cite{SSK}, 
Grove--Karcher--Ruh \cite{GKR1, GKR2}, 
Im Hof--Ruh \cite{IR}, and Suyama \cite{Suy}, et al. 
Using the Ricci flow introduced by 
Hamilton \cite{Ham}, 
Brendle and Schoen \cite{BrenSch} finally proved 
that the masterpiece can be reinforced into the differentiable $1/4$--pinching sphere theorem, 
which implies that every exotic sphere does not admit a 
$1/4$--pinched metric.\par 
By remembering that 
the $1/4$--pinching race (problem) had originated in 
Hopf's curvature pinching conjecture, the solution 
to the problem by Brendle--Schoen asks the following 
natural question of us. 

\begin{question}{\rm Replacing the unit standard sphere 
in the Hopf conjecture by an arbitrary compact 
simply connected Riemannian manifold $X$, 
should a compact 
simply connected Riemannian manifold whose 
radial curvature is close to that of $X$ 
be diffeomorphic to $X$? That is, can we weaken the assumption of the Cartan--Ambrose--Hicks 
theorem \cite{Amb, Cartan} to closeness 
of radial curvatures of the manifolds?}
\end{question}
In the question above 
the radial curvature is, by definition, 
the restriction of the sectional curvature of 
a pointed Riemannian manifold to all $2$-dimensional planes which contain the unit tangent velocity vector, 
as one of its basis, of any minimal geodesic emanating from the base point. 

\bigskip

The purpose of this article is {\em to solve the 
question above by hypothesizing that 
underlying 
closed manifolds admit metrics such that there 
is a point whose cut locus consists of a single point}. 
It is worthy of note that every homotopy $n$-sphere of 
dimension $n\ge5$ admits such a metric, 
and so are all exotic $n$-spheres. This note follows 
from Smale's $h$-cobordism theorem \cite{Sm1, Sm2} 
and Weinstein's deformation 
technique \cite{W} for metrics on twisted spheres 
(also see \cite[Proposition 7.19]{Bess}). 

\bigskip

We are now going to state our main theorem precisely. 
For each $k=1,2$ let $M_k$ be a closed 
manifold 
of dimension $n\ge2$ admitting a point 
whose cut locus consists of a single point, and 
$\langle \, \cdot\,, \cdot\, \rangle$ a Riemannian metric of 
$M_k$. 
Note that $M_k$ is homeomorphic to a sphere $S^n$ 
of dimension $n$. We take any point 
$p_k \in M_k$ satisfying 
$\Cut (p_k)=\{q_k\}$ 
where $q_k \in M_k$, and fix it. Here 
$\Cut (p_k)$ denotes the cut locus of $p_k$. 
Normalizing the metric, we can assume  
$d_{M_k}(p_k,q_k)= \pi$ 
where $d_{M_k}$ denotes the distance 
function of $M_k$. 
Set 
$
\Sph_{p_k}^{n-1}:=\{u \in T_{p_k}M_k\,|\,\|u\|=1\}
$ 
where $T_{p_k}M_k$ is the tangent space to $M_k$ 
at $p_k$. 
For each $u_k\in \Sph_{p_k}^{n-1}$, 
let $\tau_{u_k}:[0,\pi]\lra M_k$ 
denote a geodesic segment emanating 
from $p_k=\tau_{u_k}(0)$ to $q_k=\tau_{u_k}(\pi)$ 
in the direction 
$
u_k = \dot{\tau}_{u_k} (0):= (d\tau_{u_k}/dt) (0)
$, i.e., 
\begin{equation}\label{2017_04_01_geod}
\tau_{u_k}(t)=\exp_{p_k}t u_k
\end{equation} 
for all $t \in [0,\pi]$.\par 
Fix $u_1 \in \Sph^{n-1}_{p_1}$. 
Let $I_{p_1}: T_{p_1}M_1\lra T_{p_2}M_2$ be 
a linear isometry. Set $u_2:= I_{p_1}(u_1)$. 
For each $t \in [0,\pi]$ let $P^{(u_k)}_t$ $(k=1,2)$ 
be the parallel translation along the geodesic 
$\tau_{u_k}$ from $p_k$ to $\tau_{u_k}(t)$ where note that 
$\tau_{u_2} =\tau_{I_{p_1}(u_1)}$. 
Define the linear isometry 
$\Psi^{(u_1)}_t:T_{p_1}M_1\lra T_{\tau_{u_2}(t)}M_2$ 
by
\begin{equation}\label{2017_04_01_isom}
\Psi^{(u_1)}_t:= P^{(u_2)}_t \circ I_{p_1}. 
\end{equation}
Moreover we define the function $\lambda:[0,\pi]\lra \R$ by 
\begin{equation}\label{2017_05_14_fn}
\lambda (t)
:=\max_{
\substack{
u_1 \,\in\, \Sph_{p_1}^{n-1}
\\[0.5mm] x_1 \,\in\, \Sph_{u_1}^{n-2}}
} 
\left|
K^{(1)} (P^{(u_1)}_t (x_1)\wedge P^{(u_1)}_t (u_1))
- K^{(2)} (\Psi^{(u_1)}_t (x_1)\wedge \Psi^{(u_1)}_t (u_1))
\right|
\end{equation}
for all $t \in [0,\pi]$ where $
\Sph_{u_1}^{n-2}
:=
\{
x \in T_{u_1} (\Sph_{p_1}^{n-1}) \,|\,\|x\|=1
\}$ and $K^{(k)} (x\wedge y)$ ($k=1,2$) is 
the sectional curvature of the plane spanned by two 
linearly independent tangent vectors $x$ and $y$ 
at a point on $M_k$, i.e., 
\begin{equation}\label{2017_05_04_def_of_sec_curv}
K^{(k)} (x\wedge y)=\frac{\left\langle 
R^{(k)}\left(x,y\right)x, y 
\right\rangle
}{\|x\|^2\|y\|^2-\left\langle x, y \right\rangle^2}.
\end{equation}
In Eq.\,\eqref{2017_05_04_def_of_sec_curv}, $R^{(k)}$ denotes 
the curvature tensor of $M_k$ defined by 
\[
R^{(k)} (X, Y)Z
:= 
\nabla_Y \nabla_X Z - \nabla_X \nabla_Y Z + \nabla_{[X,\,Y]}Z
\]
for all vector fields $X, Y, Z$ on $M_k$, 
where $\nabla$ is the Levi--Civita connection on $M_k$. 
Note that the definition of 
$R^{(k)}$ differs from that 
of curvature tensor in literatures such as 
\cite{CE} and \cite{Sa} by a sign. 

\bigskip

With the these notations above 
our main theorem is stated as follows. 
 
\begin{theorem}{\rm (Main Theorem)}\label{2017_04_14_thm}
There exists a constant $\ve_n(M_1) >0$ 
depending on $n$ and $M_1$ such that if 
\begin{equation}\label{2017_04_14_thm_1}
\int^\pi_0\lambda (t)dt < \ve_n(M_1),
\end{equation}
then $M_2$ is diffeomorphic to $M_1$. 
\end{theorem}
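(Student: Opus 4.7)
The plan is to build a diffeomorphism $\Phi:M_1\to M_2$ by patching two natural comparison maps---one defined via exponential coordinates at the pole $p_k$, the other at the antipodal pole $q_k$---and to control their $C^1$-distance by the integrated curvature hypothesis \eqref{2017_04_14_thm_1}. Since $\Cut(p_1)=\{q_1\}$, the map $\exp_{p_1}$ restricts to a diffeomorphism from the open ball of radius $\pi$ in $T_{p_1}M_1$ onto $M_1\setminus\{q_1\}$, and analogously for $p_2$. I would therefore start with the naive comparison map
\[
\varphi:=\exp_{p_2}\circ I_{p_1}\circ\exp_{p_1}^{-1}\colon M_1\setminus\{q_1\}\lra M_2\setminus\{q_2\},
\]
extended by $\varphi(q_1):=q_2$. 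This is a diffeomorphism of the open complements and continuous on all of $M_1$, but in general not smooth at $q_1$.

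The next step is a Jacobi field estimate controlling how far $d\varphi$ is from being a linear isometry. For each $u_1\in\Sph^{n-1}_{p_1}$ and $x_1\in\Sph^{n-2}_{u_1}$, let $Y_1(t)$ be the normal Jacobi field along $\tau_{u_1}$ with $Y_1(0)=0$ and $\nabla_t Y_1(0)=x_1$, and let $Y_2(t)$ be the corresponding Jacobi field along $\tau_{u_2}$ with data transported by $I_{p_1}$. Written in the parallel frames $P^{(u_k)}_t$, both satisfy linear ODEs whose coefficient matrices are precisely the sectional curvature operators appearing in \eqref{2017_05_14_fn}. Treating the difference $\Psi^{(u_1)}_tY_1(t)-Y_2(t)$ as the solution of an inhomogeneous Jacobi equation whose forcing is pointwise dominated by $\lambda(t)$ times a quantity bounded in terms of $M_1$, a variation-of-parameters/Gronwall argument yields
\[
\sup_{t\in[0,\pi]}\Bigl(\|Y_2(t)-\Psi^{(u_1)}_tY_1(t)\|+\|\nabla_tY_2(t)-\Psi^{(u_1)}_t\nabla_tY_1(t)\|\Bigr)\le C(M_1,n)\int_0^\pi\lambda(s)\,ds.
\]
Combined with the fact that the reference Jacobi fields on $M_1$ are uniformly bounded below by a positive function depending only on $M_1$, vanishing only at $t=0$ and $t=\pi$ with controlled derivatives there, this shows that on $M_1\setminus\{q_1\}$ the differential $d\varphi$ is $O(\ve)$-close to a linear isometry, and hence that $\varphi$ extends to a bi-Lipschitz homeomorphism $M_1\to M_2$ with distortion $1+O(\ve)$.

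To upgrade $\varphi$ to a diffeomorphism I would introduce a second comparison map near $q_1$. Fix a reference direction $u^*\in\Sph^{n-1}_{p_1}$ and define a linear isometry $I_{q_1}\colon T_{q_1}M_1\to T_{q_2}M_2$ by sending $\dot\tau_{u^*}(\pi)$ to $\dot\tau_{I_{p_1}(u^*)}(\pi)$ and acting as $\Psi^{(u^*)}_\pi$ on the orthogonal complement. For $\delta>0$ smaller than the injectivity radius of $M_1$ at $q_1$, set
\[
\psi:=\exp_{q_2}\circ I_{q_1}\circ\exp_{q_1}^{-1}
\]
on the $\delta$-neighborhood of $q_1$. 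A second Jacobi-field computation, now with respect to the exponential maps at $q_1$ and $q_2$ near the conjugate time $t=\pi$, shows that $\varphi$ and $\psi$ are $C^1$-close on the overlap annulus $A:=\{x\in M_1:0<d_{M_1}(q_1,x)<\delta\}$, again with error $O(\ve)$. Choose a smooth cut-off $\chi:M_1\to[0,1]$ that equals $1$ on a neighborhood of $q_1$ and vanishes outside $A$, and define
\[
\Phi(x):=\exp_{\varphi(x)}\!\bigl(\chi(x)\,\exp_{\varphi(x)}^{-1}\psi(x)\bigr)
\]
on $A$, extended by $\varphi$ outside $A$ and by $\psi$ where $\chi\equiv 1$. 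The $C^1$-closeness guarantees that $d\Phi$ is uniformly non-degenerate, so $\Phi$ is a smooth local diffeomorphism of closed $n$-manifolds; being a $C^0$-small perturbation of the homeomorphism $\varphi$, it has topological degree $\pm 1$ and is therefore a diffeomorphism.

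The principal difficulty will be the $C^1$-comparison of $\varphi$ and $\psi$ on $A$. At $t=\pi$ the differential of $\exp_{p_k}$ at $\pi u_k$ is singular in every tangential direction, so the Jacobi fields controlling $d\varphi$ collapse at $q_1$; one must nevertheless show that their ratios to the corresponding Jacobi fields on $M_2$ remain controlled, and that the two parametrizations of a neighborhood of $q_1$ agree up to $O(\ve)$ in $C^1$-norm despite the degeneracy. This asymptotic analysis at the focal radius is exactly what forces the constant $\ve_n(M_1)$ to depend on the ambient manifold $M_1$ and not only on the dimension $n$.
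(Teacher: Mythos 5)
Your first half runs parallel to the paper: the naive exponential comparison map (the paper's $F$, your $\varphi$), and a Gr\"onwall/Jacobi-field comparison in parallel frames giving a quantitative closeness result. Up to that point the two arguments are essentially the same, and the bi-Lipschitz estimate with distortion $1+O(\ve)$ is exactly the content of the paper's Lemmas \ref{2017_04_16_Key_LEMMA}--\ref{2017_05_09_lem1}.

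The gap is in the second half, the upgrade from bi-Lipschitz homeomorphism to diffeomorphism near $q_1$. You propose to introduce a second comparison map $\psi=\exp_{q_2}\circ I_{q_1}\circ\exp_{q_1}^{-1}$ built from a \emph{single fixed} linear isometry $I_{q_1}=I^{(u^*)}_{q_1}$, and to claim that $\varphi$ and $\psi$ are $C^1$-close on the whole annulus $A$ around $q_1$. Transferred to normal coordinates at $q_1,q_2$, this is the claim that the cone map $\wt F(x)=\|x\|\,\sigma(x/\|x\|)$ is $C^1$-close to the linear map $I_{q_1}$, i.e.\ that the sphere diffeomorphism $\sigma$ is $C^1$-close to the restriction of a \emph{single} linear isometry. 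But the Jacobi-field/Gr\"onwall estimate (the paper's Key Lemma \ref{2017_04_16_Key_LEMMA}) only controls $\|d\sigma_{v_1}-I^{(u_1)}_{q_1}\|$, where the reference isometry $I^{(u_1)}_{q_1}=\Psi^{(u_1)}_\pi\circ(P^{(u_1)}_\pi)^{-1}$ varies with $u_1=\sigma^{q_1}_{p_1}(v_1)$. For two directions $u_1\neq u_1'$ one has
\[
I^{(u_1)}_{q_1}\circ\bigl(I^{(u_1')}_{q_1}\bigr)^{-1}
=P^{(u_2)}_\pi\circ I_{p_1}\circ\bigl[(P^{(u_1)}_\pi)^{-1}\circ P^{(u_1')}_\pi\bigr]\circ I_{p_1}^{-1}\circ(P^{(u_2')}_\pi)^{-1},
\]
and the bracketed factor is the holonomy of the geodesic bigon in $M_1$ bounded by $\tau_{u_1}$ and $\tau_{u_1'}$, which is governed by the full ambient curvature of $M_1$ over the interior of the bigon, not by the radial curvature difference $\lambda$. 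Nothing in the hypothesis \eqref{2017_04_14_thm_1} makes this holonomy small or makes the $M_1$- and $M_2$-holonomies approximately cancel. So the triangle inequality $\|d\sigma_{v}-I^{(u^*)}_{q_1}\|\le\|d\sigma_{v}-I^{(u_1)}_{q_1}\|+\|I^{(u_1)}_{q_1}-I^{(u^*)}_{q_1}\|$ has an uncontrolled second term, and the asserted $C^1$-closeness of $\varphi$ and $\psi$ on $A$ does not follow. You correctly flag this as ``the principal difficulty'' but the sketch does not resolve it, and I do not believe it can be resolved with only the radial curvature hypothesis.

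The paper avoids this exactly because it never needs $\sigma$ to be close to the restriction of a fixed linear map. From Key Lemma \ref{2017_04_16_Key_LEMMA} it extracts only the scalar bound $1-\delta_n/2\le\|d\wt F_x(X)\|\le 1+\delta_n/2$ (Lemma \ref{2017_05_10_lem1}), hence the bi-Lipschitz estimate $\bL(\wt F)\le 1+\delta_n$ (Lemma \ref{2017_05_09_lem1}). It then \emph{mollifies} $\wt F$ near $o_{q_1}$ and invokes Karcher's smoothing theorem \cite{K}, which says that a bi-Lipschitz map with constant satisfying $\bL^2\le 1+\{(8/\pi)(n-1)\}^{-1/2}$ becomes an immersion after convolution with a mollifier. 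This only uses the pointwise operator-norm bound, not closeness to a fixed linear isometry. The smoothed map is patched with $F$ on the annulus by a cutoff, projected to $M_2$ via a tubular-neighborhood retraction, and shown to be a proper local diffeomorphism, hence a covering map of the simply connected $M_2$, hence a diffeomorphism. If you want to keep the two-chart structure of your argument, you would need a replacement for the ``patch with a linear map'' step, and mollification plus Karcher's immersion criterion is exactly the tool that makes the bi-Lipschitz estimate suffice.
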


\begin{remark}\label{2017_05_26_rem}
We give here several remarks on 
Theorem \ref{2017_04_14_thm} and 
related results to it:
\begin{itemize}
\item Theorem \ref{2017_04_14_thm} is 
the very Cartan--Ambrose--Hicks theorem if 
$\lambda (t)\equiv 0$ on $[0,\pi]$, 
and hence produces a weak version of the theorem. 
Note here that sectional curvature 
and curvature tensor are equivalent (see, e.g., 
Eq.\,\eqref{2017_05_04_lemma1_3}). 
Moreover, since $M_k$ ($k =1,2$) 
admits a point whose cut locus consists of a single point, Theorem \ref{2017_04_14_thm} 
is a kind of a weak version of the Blaschke conjecture for spheres proved 
by Berger \cite{Bess}, 
which states that if a Riemannian manifold 
$N$ of dimension $n\ge2$ homeomorphic 
to $S^n$ has diameter equal to its 
injectivity radius, then $N$ 
is isometric to a standard sphere of constant 
curvature. In particular our theorem generalizes Cheeger's theorem \cite{C} 
(or see \cite[Theorem 7.36]{CE}) in the case where underlying manifolds 
admit a point with a single cut point, 
because we do not assume either 
closeness of $\nabla R^{(1)}$ and 
$\nabla R^{(2)}$ along $\tau_{u_1}$ 
and $\tau_{u_2}$ or $\vol (M_2) > \nu$ for some $\nu>0$ 
where $\vol (M_2)$ denotes the volume of $M_2$,  
that additionally he assumed in his theorem; 
besides, we need to look around such manifolds 
only at their base points $p_1$ and $p_2$. 
Moreover it is apparent that our theorem extends and 
weakens \cite[(iii) of Theorem 3]{KK} 
to a wider class of metrics than 
that of radially symmetric metrics in it.

\item The constant $\ve_n(M_1)$ in 
Eq.\,\eqref{2017_04_14_thm_1} is obtained as 
the unique solution of the following equation \begin{equation}\label{2017_05_26_rem_1}
\eta_n(M_1)\cdot x \exp (2(n-1)x) 
=
\frac{1}{2}
\left[
\sqrt{1+ \Big\{ \frac{8}{\pi}(n-1)\Big\}^{-\frac{1}{2}}} -1
\right]
\end{equation}
for all $x \in [0,\infty)$ where $\eta_n(M_1)$, 
depending on $n$ and $M_1$, denotes some positive 
constant concerning with Jacobi fields along $\tau_{u_1}$ 
(see Eq.\,\eqref{2017_05_06_lem2_1} for 
more details). The constant $1+\{(8/\pi)(n-1)\}^{-1/2}$
found in Eq.\,\eqref{2017_05_26_rem_1} is the same as Karcher \cite{K} estimated in order to prove a sharper version of Shikata's theorem in \cite{Sh1}.
\item The related results to 
Theorem \ref{2017_04_14_thm} are the differentiable exotic sphere theorems I and II proved by authors \cite{KT}. In the theorem I, the hypothesis \eqref{2017_04_14_thm_1} 
can be replaced by either 
\begin{equation}\label{2017_05_26_rem_2}
\Big\| \frac{d^{2} c_{\,\gamma}}{dt^2} \Big\|^2 - 
2\bL (\sigma)^{-2} 
\le 2\Big\{ \frac{\sqrt{2}-1}{2(e^\pi-1)} \Big\}^2 -1
\end{equation}
for all unit speed geodesic segments 
$\gamma ([0,\pi])\subset \Sph_{q_1}^{n-1}:=\{v \in T_{q_1}M_1\,|\,\|v\|=1\}$, 
or   
$
\bL(\sigma)^2 \le 1+ \left\{ (8/\pi)(n-1)\right\}^{-1/2}
$ 
where
$
\sigma: \Sph^{n-1}_{q_1}
\lra \Sph^{n-1}_{q_2}
:=\{v \in T_{q_2}M_2\,|\,\|v\|=1\}$ 
is the diffeomorphism defined 
by Eq.\,\eqref{2017_04_18_Sigma}, 
$\bL(\sigma)$ is the bi-Lipschitz constant 
of $\sigma$ defined by 
\[
\bL(\sigma)
:=\inf\{
a \,|\, a^{-1} \|u-v\| 
\le 
\|\sigma (u)-\sigma(v)\|
\le a \|u-v\| \ {\rm for \ all} \ u,v \in \Sph^{n-1}_{q_1}\}, 
\]
and 
$c_{\,\gamma}:= \sigma\circ \gamma : [0,\pi]\lra\Sph^{n-1}_{q_2}$ for each geodesic 
segment $\gamma ([0,\pi])\subset \Sph_{q_1}^{n-1}$. 
In the theorem II, the hypothesis \eqref{2017_05_26_rem_2} is replaced by 
$
\angle(\ol{c}_{\,\gamma}(t), c_{\,\gamma}(t)) < 
\pi / 2$ 
for all unit speed geodesic segments 
$\gamma ([0,\pi])\subset \Sph_{q_1}^{n-1}$ where $\ol{c}_{\,\gamma}$ is the smooth curve on $T_{q_2}M_2$ given by 
$\ol{c}_{\,\gamma}(t):= c_{\,\gamma}(0)\cos t + (dc_{\,\gamma}/dt) (0)\sin t$ for all $t \in [0,\pi]$.
\end{itemize}
\end{remark}

\begin{acknowledgement}
In this work the first named author was supported by the JSPS KAKENHI Grant Numbers 17K05220, and partially 
16K05133, 18K03280.
\end{acknowledgement}

\section{
Key lemma
}\label{sec2}

The aim of this section is to show Key Lemma (Lemma \ref{2017_04_16_Key_LEMMA}) that we shall apply to the proof of Theorem \ref{2017_04_14_thm}. The integral form 
of the Gr\"onwall inequality \cite{Bell, Gron} 
plays an important role in the proof 
of Lemma \ref{2017_04_16_Key_LEMMA}.\par  
Throughout this section, 
for each $k=1,2$ let $M_k$ be a closed manifold 
of dimension $n\ge2$ admitting a point $p_k \in M_k$ 
such that $\Cut (p_k)=\{q_k\}$, 
and for any $u_k \in \Sph^{n-1}_{p_1}$ let 
$\tau_{u_k}:[0,\pi]\lra M_k$ 
be the geodesic segment emanating from $p_k$ to $q_k$ defined by Eq.\,\eqref{2017_04_01_geod}. 
We assume $d_{M_k}(p_k,q_k):= \pi$ by normalizing the metric where $d_{M_k}$ denotes 
the distance function on $M_k$. 
All other notations in the following are the same 
as those defined in Section \ref{sec1}.\par  
Fix $u_1 \in \Sph^{n-1}_{p_1}$. 
Choose an orthonormal basis 
$e_1^{(u_1)}, e_2^{(u_1)}, \dots, e_n^{(u_1)}$ 
of $T_{p_1}M_1$ satisfying $e_n^{(u_1)} = u_1$. 
Setting $u_2:=I_{p_1}(u_1)$, 
we have the orthonormal basis 
$e_1^{(u_2)}, e_2^{(u_2)}, \dots, e_n^{(u_2)}$ 
of $T_{p_2}M_2$ given by 
$e_i^{(u_2)}:= I_{p_1} ( e_i^{(u_1)})$ for each 
$i=1,2, \ldots, n$, which satisfies $e_n^{(u_2)}= u_2$. 
For each $k=1,2$ let
\begin{equation}\label{2017_05_05_sec2_1}
E_i^{(k)} (t):=P^{(u_k)}_t ( e_i^{(u_k)}), \quad i=1,2, \ldots, n.
\end{equation}
$E_1^{(k)}, E_2^{(k)}, \dots, E_n^{(k)}$ 
are then the parallel orthonormal fields along $\tau_{u_k}$. 
In particular 
\begin{equation}\label{2017_05_05_sec2_2}
E_i^{(1)}(t) = P_t^{(u_1)}(e^{(u_1)}_i), 
\quad 
E_i^{(2)}(t) = P_t^{(u_2)}(I_{p_1}(e^{(u_1)}_i)) 
= \Psi_t^{(u_1)} (e^{(u_1)}_i)
\end{equation}
for each $i=1,2,\ldots,n-1$ and 
\begin{equation}\label{2017_05_05_sec2_3}
\dot{\tau}_{u_1} (t)= E_n^{(1)}(t)= P_t^{(u_1)} (u_1), \quad 
\dot{\tau}_{u_2} (t)= E_n^{(2)}(t)= \Psi_t^{(u_1)} \big(u_1\big).
\end{equation}
Moreover for each $i,j=1,2,\ldots,n-1$ 
let  
\[
a_{ij}^{(k)} (t)
:=
\big\langle 
R^{(k)}
(
E_i^{(k)} (t), \dot{\tau}_{u_k}(t)
)
\dot{\tau}_{u_k}(t), 
E_j^{(k)} (t)
\big\rangle
\] 
for all $t\in[0,\pi]$. 
Furthermore we define 
the square matrix 
$A (t\,; u_k)$ of order $2(n-1)$ by 
\[
A (t\,; u_k):=\left(
\begin{array}{cc}
0 & I_{n-1}\\
\big(a_{ij}^{(k)} (t)\big) & 0
\end{array}
\right),
\]
where $I_{n-1}$ is the $(n-1)$-th unit matrix. 
Note that $\big(a_{ij}^{(k)} (t)\big)$ is the 
symmetric matrix of order $n-1$. 

\begin{lemma}\label{2017_05_04_lemma1}
For any $t \in [0,\pi]$,
\[
\|A(t;u_1)-A(t;u_2)\|\le 2(n-1)\lambda (t)
\]
holds where $\|\cdot\|$ denotes the linear 
operator norm. In particular
\[
\|A(t;u_2)\|\le c_1 (n, p_1) + 2(n-1)\lambda (t)
\]
for all $t \in [0,\pi]$ 
where 
$c_1 (n, p_1)
:= 
\max
\{\|A(t;u_1)\|\,|\, 
u_1 \in \Sph_{p_1}^{n-1}, 
t \in [0,\pi]
\}$.
\end{lemma}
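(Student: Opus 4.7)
The matrices $A(t;u_1)$ and $A(t;u_2)$ differ only in their lower-left $(n-1)\times(n-1)$ blocks, so a direct computation on block vectors $(x,y)^{T}$ shows
\[
\|A(t;u_1)-A(t;u_2)\| = \|(a_{ij}^{(1)}(t)) - (a_{ij}^{(2)}(t))\|,
\]
reducing the problem to estimating the operator norm of the symmetric difference $C(t):=(a_{ij}^{(1)}(t))-(a_{ij}^{(2)}(t))$. The plan is to bound each entry of $C(t)$ by $2\lambda(t)$, using the definition of $\lambda$ applied to carefully chosen unit vectors in $\Sph_{u_1}^{n-2}$, and then to pass from an entrywise bound to an operator-norm bound via a row-sum estimate.

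\textbf{Diagonal entries.} From \eqref{2017_05_05_sec2_2}--\eqref{2017_05_05_sec2_3}, the parallel frames along $\tau_{u_k}$ satisfy $E_i^{(1)}(t)=P_t^{(u_1)}(e_i^{(u_1)})$ and $E_i^{(2)}(t)=\Psi_t^{(u_1)}(e_i^{(u_1)})$, with $\dot{\tau}_{u_k}(t)=P_t^{(u_1)}(u_1)$ resp.\ $\Psi_t^{(u_1)}(u_1)$. Because $E_i^{(k)}$ and $\dot{\tau}_{u_k}$ are orthonormal, $a_{ii}^{(k)}(t)=K^{(k)}(E_i^{(k)}(t)\wedge \dot{\tau}_{u_k}(t))$. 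Taking $x_1=e_i^{(u_1)}\in\Sph_{u_1}^{n-2}$ in the definition \eqref{2017_05_14_fn} gives $|a_{ii}^{(1)}(t)-a_{ii}^{(2)}(t)|\le \lambda(t)$.

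\textbf{Off-diagonal entries.} For $i\neq j$, a polarization identity from the standard symmetries of $R^{(k)}$ yields
\[
\bigl\langle R^{(k)}(E_i^{(k)}+E_j^{(k)},\dot\tau_{u_k})\dot\tau_{u_k},\, E_i^{(k)}+E_j^{(k)}\bigr\rangle = a_{ii}^{(k)}(t)+a_{jj}^{(k)}(t)+2a_{ij}^{(k)}(t).
\]
Since $(E_i^{(k)}+E_j^{(k)})/\sqrt{2}$ is a unit vector orthogonal to $\dot\tau_{u_k}$, the left side equals $2K^{(k)}\bigl(\frac{E_i^{(k)}+E_j^{(k)}}{\sqrt 2}\wedge\dot{\tau}_{u_k}\bigr)$ by \eqref{2017_05_04_def_of_sec_curv}. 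Choosing $x_1=(e_i^{(u_1)}+e_j^{(u_1)})/\sqrt{2}\in\Sph_{u_1}^{n-2}$ in \eqref{2017_05_14_fn}, applying the previous display for $k=1,2$ and subtracting gives
\[
|a_{ij}^{(1)}(t)-a_{ij}^{(2)}(t)| \le \lambda(t) + \tfrac{1}{2}|a_{ii}^{(1)}-a_{ii}^{(2)}| + \tfrac{1}{2}|a_{jj}^{(1)}-a_{jj}^{(2)}| \le 2\lambda(t).
\]

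\textbf{From entries to operator norm.} The symmetric matrix $C(t)$ has all entries bounded by $2\lambda(t)$, so by the row-sum (equivalently, $\ell^\infty$) estimate for the spectral radius,
\[
\|C(t)\| \le \max_{i}\sum_{j=1}^{n-1}|a_{ij}^{(1)}(t)-a_{ij}^{(2)}(t)| \le 2(n-1)\lambda(t),
\]
which gives the first assertion. The second follows from the triangle inequality together with the definition of $c_1(n,p_1)$; note that the maximum in $c_1(n,p_1)$ exists because $(u_1,t)\mapsto\|A(t;u_1)\|$ is continuous on the compact set $\Sph_{p_1}^{n-1}\times[0,\pi]$. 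The only subtlety is the polarization step and the verification that the normalized combinations $(e_i^{(u_1)}+e_j^{(u_1)})/\sqrt 2$ truly lie in $\Sph_{u_1}^{n-2}$, which is immediate since $e_i^{(u_1)},e_j^{(u_1)}\perp u_1=e_n^{(u_1)}$ for $i,j\le n-1$.
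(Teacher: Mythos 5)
Your proof is correct and follows essentially the same route as the paper: isolate the lower-left block, bound the diagonal entries directly from the definition of $\lambda$, bound the off-diagonal entries via a polarization identity with $x_1 = (e_i^{(u_1)}+e_j^{(u_1)})/\sqrt 2$, and pass to the operator norm by a row-sum estimate (the paper's Eqs.\,\eqref{2017_05_04_lemma1_1}--\eqref{2017_05_04_lemma1_4} are exactly this computation). One small slip: with the paper's sign convention for $R^{(k)}$ one has $a_{ii}^{(k)}(t) = -K^{(k)}(E_i^{(k)}(t)\wedge\dot\tau_{u_k}(t))$ (see Eq.\,\eqref{2017_05_04_lemma1_1}), and likewise $2K^{(k)}\bigl(\frac{E_i^{(k)}+E_j^{(k)}}{\sqrt2}\wedge\dot\tau_{u_k}\bigr) = -(a_{ii}^{(k)}+2a_{ij}^{(k)}+a_{jj}^{(k)})$, not $+$ as you wrote; since your argument only uses these identities through absolute values of differences, the final bounds $|a_{ii}^{(1)}-a_{ii}^{(2)}|\le\lambda(t)$, $|a_{ij}^{(1)}-a_{ij}^{(2)}|\le2\lambda(t)$, and $\|A(t;u_1)-A(t;u_2)\|\le2(n-1)\lambda(t)$ are unaffected, but the sign should be fixed for consistency with the paper.
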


\begin{proof}
Fix $t \in [0,\pi]$. Since 
\begin{equation}\label{2017_04_15_sec2.2_1}
a_{ij}^{(k)} (t)
=
- \big\langle 
R^{(k)} 
(
E_i^{(k)} (t), \dot{\tau}_{u_k}(t)
)
E_j^{(k)} (t), 
\dot{\tau}_{u_k}(t)
\big\rangle \quad (k=1,2), 
\end{equation}
Eq.\,\eqref{2017_05_04_def_of_sec_curv} gives 
\begin{equation}\label{2017_05_04_lemma1_1}
K^{(k)} ( E^{(k)}_i(t) \wedge \dot{\tau}_{u_k}(t))
= \big\langle 
R^{(k)} 
(
E^{(k)}_i(t), \dot{\tau}_{u_k}(t)
)
E^{(k)}_i(t), \dot{\tau}_{u_k}(t)
\big\rangle 
= -a_{ii}^{(k)} (t).
\end{equation}
Since $E_i^{(k)}(t)+E_j^{(k)}(t)$ is orthonormal to $\dot{\tau}_{u_k}(t)$, and since 
$\|E_i^{(k)}(t)+E_j^{(k)}(t)\| = \sqrt{2}$, 
combining Eqs.\,\eqref{2017_05_04_def_of_sec_curv}, 
\eqref{2017_04_15_sec2.2_1}, and \eqref{2017_05_04_lemma1_1} shows 
\begin{equation}\label{2017_05_04_lemma1_2}
K^{(k)} ((E_i^{(k)}(t)+E_j^{(k)}(t)) \wedge \dot{\tau}_{u_k}(t))
= \frac{1}{2}
\big(
-a_{ii}^{(k)} (t)-2a_{ij}^{(k)} (t)-a_{jj}^{(k)} (t)
\big).
\end{equation}
From Eqs.\,\eqref{2017_05_04_lemma1_1} and 
\eqref{2017_05_04_lemma1_2} we have 
\begin{align}\label{2017_05_04_lemma1_3}
a_{ij}^{(k)} (t) = 
&\frac{1}{2}
\big\{
K^{(k)} (E^{(k)}_i(t)\wedge \dot{\tau}_{u_k}(t)) 
+K^{(k)} (E^{(k)}_j(t)\wedge \dot{\tau}_{u_k}(t)) 
\big\}\notag\\[1mm]
& - K^{(k)} ((E_i^{(k)}(t)+E_j^{(k)}(t)) \wedge \dot{\tau}_{u_k}(t)).
\end{align}
Since $K^{(k)}(x\wedge y)$ does not depend 
on the choice of the spanning vectors, we see,  
by Eqs.\,\eqref{2017_05_05_sec2_2}, \eqref{2017_05_05_sec2_3}, \eqref{2017_05_04_lemma1_3} and 
the triangle inequality, that 
\begin{equation}\label{2017_05_04_lemma1_4}
\big|a_{ij}^{(1)} (t)-a_{ij}^{(2)} (t) \big| \le 2 \lambda (t).
\end{equation}
We therefore see, by Eq.\,\eqref{2017_05_04_lemma1_4}, that 
\begin{align*}
\|A(t;u_1)-A(t;u_2)\| 
&=\big\|
\big(
a_{ij}^{(1)} (t)-a_{ij}^{(2)} (t)
\big)
\big\|\\[1mm]
&\le(n-1)
\max_{i,\,j\,=\,1,\,2,\,\ldots,\,n-1}
\big| a_{ij}^{(1)} (t)-a_{ij}^{(2)} (t) \big|\\[1mm]
&\le 2(n-1)\lambda (t),
\end{align*}
which is the first assertion. Since 
\[
\|A(t;u_2)\| = \|A(t;u_2)-A(t;u_1)+A(t;u_1)\| 
\le \|A(t;u_2)-A(t;u_1)\|+\|A(t;u_1)\|,
\]
the second assertion follows from the first one. 
$\qedd$
\end{proof}

Let $\Sph^{n-1}_{q_k}:=\{v \in T_{q_k}M_k\,|\,\|v\|=1\}$ ($k=1,2$). 
Since $\Cut (p_k)=\{q_k\}$, 
we have the diffeomorphism 
$\sigma^{p_k}_{q_k}$ from $\Sph^{n-1}_{p_k}$ 
onto $\Sph^{n-1}_{q_k}$ 
given by
\begin{equation}\label{2017_05_14_diffeom}
\sigma^{p_k}_{q_k} (u_k):= -\dot{\tau}_{u_k} (\pi),  
\end{equation}
for all $u_k \in \Sph^{n-1}_{p_k}$. 
The map $\sigma$ from $\Sph^{n-1}_{q_1}$ onto 
$\Sph^{n-1}_{q_2}$ defined by
\begin{equation}\label{2017_04_18_Sigma}
\sigma
:= \sigma^{p_2}_{q_2} \circ I_{p_1} \circ \sigma^{q_1}_{p_1}
\end{equation}
is thus a diffeomorphism where 
$\sigma^{q_1}_{p_1}:= (\sigma_{q_1}^{p_1})^{-1}$. 
Moreover for each $u_1 \in \Sph^{n-1}_{p_1}$ 
let $I_{q_1}^{(u_1)}:T_{q_1}M_1\lra T_{q_2}M_2$ 
denote the linear isometry given by 
\begin{equation}\label{2017_05_10_Isom}
I_{q_1}^{(u_1)} :=\Psi_\pi^{(u_1)}\circ (P_\pi^{(u_1)})^{-1}.
\end{equation}

\begin{lemma}{\rm (Key Lemma)}\label{2017_04_16_Key_LEMMA}
Set
\begin{equation}\label{2017_04_16_sec2.2_new2}
\delta_n:= 
\sqrt{1+ 
\Big\{
\frac{8}{\pi}(n-1)
\Big\}^{-\frac{1}{2}}} -1.
\end{equation}
Then 
there is a constant $\ve_n(M_1) >0$ 
such that if 
\[
\int_0^\pi \lambda (t) dt < \ve_n(M_1),
\]
then for any $u_1 \in \Sph^{n-1}_{p_1}$ 
the differential $d \sigma_{v_1}$ of $\sigma$ at 
$v_1:=\sigma^{p_1}_{q_1}(u_1)\in \Sph^{n-1}_{q_1}$
and $I^{(u_1)}_{q_1}$ are $\delta_n/2$-close with respect to the linear 
operator norm.
\end{lemma}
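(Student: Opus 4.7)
The strategy is to reduce the comparison $\|d\sigma_{v_1}-I^{(u_1)}_{q_1}\|\le\delta_n/2$ to a comparison between the fundamental solutions of the two matrix Jacobi ODEs $(Y^{(k)})'=A(t;u_k)Y^{(k)}$ along $\tau_{u_1}$ and $\tau_{u_2}$, and then to invoke the integral form of Grönwall's inequality together with Lemma \ref{2017_05_04_lemma1}.

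First I would identify $d\sigma_{v_1}$ in terms of Jacobi fields. Since $\sigma^{p_1}_{q_1}(u)=-\dot\tau_u(\pi)$, differentiation through a variation of geodesics yields $d\sigma^{p_1}_{q_1}(\eta)=-J'(\pi)$, where $J$ is the Jacobi field along $\tau_{u_1}$ with $J(0)=0$ and $J'(0)=\eta$. Inverting and composing through $I_{p_1}$ and $d\sigma^{p_2}_{q_2}$, one obtains $d\sigma_{v_1}(\xi)=-J_2'(\pi)$, where $J_1$ is the unique Jacobi field along $\tau_{u_1}$ with $J_1(0)=0$ and $J_1'(\pi)=-\xi$ (uniqueness being precisely the fact that $\sigma^{p_1}_{q_1}$ is a diffeomorphism) and $J_2$ is the Jacobi field along $\tau_{u_2}$ with $J_2(0)=0$ and $J_2'(0)=I_{p_1}(J_1'(0))$. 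By construction, $I^{(u_1)}_{q_1}(\xi)$ is the vector with the same coordinates as $\xi$ when $\{E^{(1)}_i(\pi)\}_{i=1}^{n-1}$ is identified with $\{E^{(2)}_i(\pi)\}_{i=1}^{n-1}$ through $I_{p_1}$. Writing $J_k=\sum y^{(k)}_i E^{(k)}_i$ and $Y^{(k)}=(y^{(k)},(y^{(k)})')^{\top}$, let $\Phi^{(k)}(t)$ be the fundamental matrix of $(Y^{(k)})'=A(t;u_k)Y^{(k)}$ with $\Phi^{(k)}(0)=I_{2(n-1)}$, and let $F^{(k)}(t)$ denote its lower-right $(n-1)\times(n-1)$ block, so that $(y^{(k)})'(t)=F^{(k)}(t)(y^{(k)})'(0)$ whenever $y^{(k)}(0)=0$. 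If $c\in\R^{n-1}$ is the coordinate vector of $\xi$ in $\{E^{(1)}_i(\pi)\}$, then $(y^{(1)})'(\pi)=-c$ forces $(y^{(1)})'(0)=-F^{(1)}(\pi)^{-1}c$, whence $(y^{(2)})'(0)=(y^{(1)})'(0)$ and $(y^{(2)})'(\pi)=-F^{(2)}(\pi)F^{(1)}(\pi)^{-1}c$. Consequently
\[
\|d\sigma_{v_1}(\xi)-I^{(u_1)}_{q_1}(\xi)\|\le\|F^{(2)}(\pi)-F^{(1)}(\pi)\|\cdot\|F^{(1)}(\pi)^{-1}\|\cdot\|\xi\|,
\]
with pointwise invertibility of $F^{(1)}(\pi)$ being the matrix expression of the invertibility of $d\sigma^{p_1}_{q_1}$.

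Subtracting the integrated forms of the two ODEs gives
\[
\Phi^{(2)}(t)-\Phi^{(1)}(t)=\int_0^t[A(s;u_2)-A(s;u_1)]\Phi^{(1)}(s)\,ds+\int_0^t A(s;u_2)[\Phi^{(2)}(s)-\Phi^{(1)}(s)]\,ds,
\]
and the integral Grönwall inequality, combined with the bounds of Lemma \ref{2017_05_04_lemma1} and the a priori estimate $\|\Phi^{(1)}(s)\|\le e^{\pi c_1(n,p_1)}$, yields
\[
\|F^{(2)}(\pi)-F^{(1)}(\pi)\|\le 2(n-1)\,e^{2\pi c_1(n,p_1)}\exp\!\Bigl(2(n-1)\!\int_0^\pi\!\lambda\,dt\Bigr)\!\int_0^\pi\!\lambda(t)\,dt.
\]
Since $u_1\mapsto\|F^{(1)}(\pi;u_1)^{-1}\|$ is continuous on the compact sphere $\Sph^{n-1}_{p_1}$, it admits a uniform upper bound; absorbing this, together with $2(n-1)e^{2\pi c_1(n,p_1)}$, into a single constant $\eta_n(M_1)>0$, the target bound $\|d\sigma_{v_1}-I^{(u_1)}_{q_1}\|\le\delta_n/2$ reads $\eta_n(M_1)\,x\exp(2(n-1)x)\le\delta_n/2$ with $x:=\int_0^\pi\lambda\,dt$. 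The left-hand side is strictly increasing from $0$ to $\infty$, so the equation with equality has a unique positive solution, which one takes as $\ve_n(M_1)$, matching Eq.\,\eqref{2017_05_26_rem_1}.

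The main obstacle is securing the uniform invertibility bound $\max_{u_1\in\Sph^{n-1}_{p_1}}\|F^{(1)}(\pi;u_1)^{-1}\|<\infty$: pointwise invertibility is immediate from the diffeomorphism property of $\sigma^{p_1}_{q_1}$, but extracting a uniform estimate (which alone makes $\eta_n(M_1)$ a well-defined number depending only on $n$ and $M_1$) requires continuous dependence of $\Phi^{(1)}$ on the initial direction $u_1$ together with compactness of $\Sph^{n-1}_{p_1}$.
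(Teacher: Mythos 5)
Your proposal is correct and takes essentially the same route as the paper: reduce to the Jacobi ODE in parallel coordinates, compare the two solutions via the integral form of Gr\"onwall's inequality together with Lemma \ref{2017_05_04_lemma1}, and normalize by the (uniformly positive) size of $DJ^{\,(x_1)}_{u_1}/dt\,(\pi)$ to convert the solution estimate into an operator-norm estimate on $d\sigma_{v_1}-I^{(u_1)}_{q_1}$. The only cosmetic difference is bookkeeping: you work with the fundamental matrix $\Phi^{(k)}$ and write the normalizing factor as $\|F^{(1)}(\pi)^{-1}\|$, whereas the paper tracks a single Jacobi field at a time and uses $c_3(n,p_1)=\min\|DJ^{\,(x_1)}_{u_1}/dt\,(\pi)\|$, which is exactly the reciprocal of $\max_{u_1}\|F^{(1)}(\pi;u_1)^{-1}\|$; the Gr\"onwall step and the resulting transcendental equation for $\ve_n(M_1)$ are the same.
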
 

\begin{proof}
Fix $u_1 \in \Sph^{n-1}_{p_1}$.  Let 
$u_2:=I_{p_1}(u_1) \in \Sph^{n-1}_{p_2}$, 
and $\Sph_{u_2}^{n-2}
:=\{x \in T_{u_2}(\Sph_{p_2}^{n-1})\,|\,\|x\|=1\}$.  
For any fixed $x_1 \in \Sph_{u_1}^{n-2}$ let 
$x_2:= I_{p_1}(x_1) \in \Sph_{u_2}^{n-2}$ 
where we identify $T_{u_k}(T_{p_k}M_k)$ 
with $T_{p_k}M_k$ ($k=1,2$). 
Let $J^{\,(x_k)}_{u_k}$ be 
the Jacobi field along $\tau_{u_k}$ given by 
$J^{\,(x_k)}_{u_k}(0)=0$ and $(DJ^{\,(x_k)}_{u_k}/dt)(0)=x_k \in \Sph_{u_k}^{n-2}$ 
where $DJ^{\,(x_k)}_{u_k}/dt$ denotes the covariant 
derivative of $J^{(x_k)}_{u_k}$ along $\tau_{u_k}$. 
Setting 
$
f^{\,(k)}_i(t)
:= 
\langle 
J^{\,(x_k)}_{u_k} (t), E_i^{(k)} (t) 
\rangle
$ for all $t \in [0,\pi]$,  
the definition of $J^{\,(x_k)}_{u_k}$ 
and the Gauss lemma give 
$
J^{\,(x_k)}_{u_k} (t)
= \sum_{i=1}^{n-1}f^{\,(k)}_i(t) E_i^{(k)} (t)$. 
For simplicity of notation we set  
\[
\dot{f}^{\,(k)}_i (t) := \frac{d f^{\,(k)}_i}{dt}(t), \quad 
\ddot{f}^{\,(k)}_i (t) := \frac{d^{\,2} f^{\,(k)}_i}{dt^2}(t)
\]
for all $i=1,2,\ldots, n-1$. Since 
\[
(d\sigma^{p_k}_{q_k})_{u_k} (x_k)
=(d\sigma^{p_k}_{q_k})_{u_k} 
\Big(
\cfrac{DJ^{\,(x_k)}_{u_k}}{dt} (0)
\Big) 
=-\cfrac{DJ^{\,(x_k)}_{u_k}}{dt} (\pi),
\]
Eq.\,\eqref{2017_04_18_Sigma} shows 
\begin{align}\label{2017_04_02_lem1_3}
d \sigma_{v_1} 
\Big(
\cfrac{DJ^{\,(x_1)}_{u_1}}{dt} (\pi)
\Big) 
&=(d \sigma^{p_2}_{q_2}\circ I_{p_1})_{u_1} (-x_1)
=-(d \sigma^{p_2}_{q_2})_{u_2} (x_2)\notag
\\[1mm]
&=\frac{DJ^{\,(x_2)}_{u_2}}{dt} (\pi)
=\sum_{i=1}^{n-1} \dot{f}^{\,(2)}_i (\pi)E^{(2)}_i(\pi).
\end{align}
From Eq.\,\eqref{2017_05_05_sec2_1} and the second 
one in Eq.\,\eqref{2017_05_05_sec2_2} we obtain 
\[
I_{q_1}^{(u_1)} (E_i^{(1)} (\pi)) 
= \Psi_\pi^{(u_1)} (e^{(u_1)}_i) 
= E^{(2)}_i(\pi). 
\]
We then have 
\begin{equation}\label{2017_04_02_lem1_4}
I^{(u_1)}_{q_1}
\Big(
\cfrac{DJ^{\,(x_1)}_{u_1}}{dt} (\pi)
\Big) 
=I^{(u_1)}_{q_1}
\Big(
\sum_{i=1}^{n-1} \dot{f}^{\,(1)}_i (\pi)E^{(1)}_i(\pi)
\Big)
=\sum_{i=1}^{n-1} \dot{f}^{\,(1)}_i (\pi)E^{(2)}_i(\pi).
\end{equation}
It follows from Eqs.\,\eqref{2017_04_02_lem1_3} and \eqref{2017_04_02_lem1_4} that 
\begin{align}\label{2017_04_02_lem1_5}
\Big\|
d \sigma_{v_1} 
\Big(
\frac{DJ^{\,(x_1)}_{u_1}}{dt} (\pi)
\Big) 
-
I^{(u_1)}_{q_1}
\Big(
\frac{DJ^{\,(x_1)}_{u_1}}{dt} (\pi)
\Big) 
\Big\|
=\Big\| 
\sum_{i=1}^{n-1}
(
\dot{f}^{\,(2)}_i (\pi)- \dot{f}^{\,(1)}_i (\pi)
)
E^{(2)}_i(\pi)
\Big\|.
\end{align}

Let 
$
\wt{J}^{\,(x_k)}_{u_k}(t) 
:= {}^t 
(
f^{\,(k)}_1 (t), \ldots, f^{\,(k)}_{n-1} (t),
\dot{f}^{\,(k)}_1 (t), \ldots, \dot{f}^{\,(k)}_{n-1} (t)
)
\in \R^{2(n-1)}$. 
Define the smooth function $\varphi:[0,\pi]\lra\R$ by 
$
\varphi (t)
:=
\|
\wt{J}^{\,(x_1)}_{u_1} (t)- \wt{J}^{\,(x_2)}_{u_2} (t)
\|
$. 
In the case where $\varphi\equiv 0$ on $[0,\pi]$,
$\dot{f}^{\,(1)}_i (\pi)= \dot{f}^{\,(2)}_i (\pi)$ 
holds for each $i=1,2,\ldots,n-1$, and hence 
Eq.\,\eqref{2017_04_02_lem1_5} shows that 
$d \sigma_{v_1}$ and $I^{(u_1)}_{q_1}$ 
are $\delta_n/2$-close. From this argument 
we next consider the case where 
there is an interval $[a, b) \subset [0,\pi]$ 
such that $\varphi (t) >0$ on $(a,b)$ 
with $\varphi (a)=0$: Since $J^{\,(x_k)}_{u_k}$ 
satisfies the Jacobi equation 
\[
\cfrac{D^2 J^{\,(x_k)}_{u_k}}{dt^2} (t) 
+ 
R^{(k)} 
(
\dot{\tau}_{u_k}(t), J^{\,(x_k)}_{u_k} (t) 
)\dot{\tau}_{u_k}(t)=0, \quad t \in [0,\pi], 
\]
we obtain
\begin{equation}\label{2017_04_02_lem1_1}
\ddot{f}^{\,(k)}_j (t) - \sum^{n-1}_{i=1}a_{ij}^{\,(k)}(t)
f^{\,(k)}_i (t) =0 
\end{equation}
for all $t \in [0,\pi]$ and $j=1,2,\ldots, n-1$. 
Substituting Eq.\,\eqref{2017_04_02_lem1_1} 
for the following 
\[
\cfrac{d\wt{J}^{\,(x_k)}_{u_k}}{dt} (t)
={}^t 
(
\dot{f}^{\,(k)}_1 (t), \ldots, \dot{f}^{\,(k)}_{n-1} (t), 
\ddot{f}^{\,(k)}_1 (t), \ldots, \ddot{f}^{\,(k)}_{n-1} (t)
),
\]
we have
\[
\cfrac{d\wt{J}^{\,(x_k)}_{u_k}}{dt} (t)
= A(t\,;u_k)\wt{J}^{\,(x_k)}_{u_k}(t) \in \R^{2(n-1)}
\]
for all $t \in[0,\pi]$. Hence, 
applying the Cauchy--Schwarz inequality and the triangle one to $\varphi'(t)$, 
we see that 
for any $t \in (a,b)$, 
\begin{align}\label{2017_04_02_lem1_6}
\varphi'(t) 
&=
\frac{1}
{\| 
\wt{J}^{\,(x_1)}_{u_1} (t)- \wt{J}^{\,(x_2)}_{u_2} (t) 
\|}
\Big\langle 
\cfrac{d \wt{J}^{\,(x_1)}_{u_1}}{dt} (t)
- \cfrac{d \wt{J}^{\,(x_2)}_{u_2}}{dt} (t), 
\wt{J}^{\,(x_1)}_{u_1} (t)-\wt{J}^{\,(x_2)}_{u_2} (t)
\Big\rangle\notag\\[1mm]
&\le 
\|
A(t\,;u_1)\wt{J}^{\,(x_1)}_{u_1}(t)- A(t\,;u_2)\wt{J}^{\,(x_2)}_{u_2}(t)
\|\notag\\[1mm]
&=
\|
\left(A(t\,;u_1)- A(t\,;u_2)\right)
\wt{J}^{\,(x_1)}_{u_1}(t)
+ A(t\,;u_2)
(
\wt{J}^{\,(x_1)}_{u_1}(t) - \wt{J}^{\,(x_2)}_{u_2}(t)
)
\|\notag\\[1mm]
&\le
\|
A(t\,;u_1)- A(t\,;u_2)
\|
\cdot
\|
\wt{J}^{\,(x_1)}_{u_1}(t)
\|
+ 
\|
A(t\,;u_2)
\|
\cdot \varphi (t).
\end{align}
Let 
\[
h_a (t) := \int_a^t 
\|
A(s\,;u_1)- A(s\,;u_2)
\|
\cdot
\|
\wt{J}^{\,(x_1)}_{u_1}(s)
\|ds
\]
for all $t\in (a,b)$. 
Since $\varphi (a) =0$, 
the integration of Eq.\,\eqref{2017_04_02_lem1_6} 
from $a$ to $t$ yields the inequality 
\[
\varphi (t) 
\le 
h_a(t) 
+ 
\int_a^t 
\|
A(s\,;u_2)
\|
\cdot \varphi (s)\,ds.
\]
Since $\varphi (t)$, $h_a (t)$, 
and $\left\|A(t\,;u_2)\right\|$ are 
continuous on $(a,b)$, 
and since $\left\|A(t\,;u_2)\right\|\ge 0$ on $(a,b)$, 
the integral 
form of Gr\"onwall's 
inequality \cite{Gron, Bell} gives 
\begin{equation}\label{2017_04_02_lem1_7}
\varphi (t) 
\le 
h_a(t) 
+ 
\int_a^t 
h_a(s) 
\left\|
A(s\,;u_2)
\right\|
\exp 
\Big(\int_s^t \left\|
A(r\,;u_2)
\right\|dr
\Big)ds
\end{equation}
for all $t \in (a,b)$. 
Since $h_a$ is non-decreasing on $(a,b)$, 
we see, by Eq.\,\eqref{2017_04_02_lem1_7}, 
that 
\begin{align}
\varphi (t) 
&\le 
h_a(t) 
+ 
h_a(t) \int_a^t 
\left\|
A(s\,;u_2)
\right\|
\exp 
\Big(\int_s^t \left\|
A(r\,;u_2)
\right\|dr
\Big)ds\notag\\[1mm]
&= 
h_a(t) 
+ 
h_a(t) \int_a^t 
\frac{\partial}{\partial s}
\Big\{
-\exp 
\Big(
-\int^s_t 
\left\|
A(r\,;u_2)
\right\|dr
\Big)
\Big\}ds\notag\\[1mm]
&= 
h_a(t) 
+ 
h_a(t) 
\Big[
-\exp 
\Big(-\int^s_t \left\|
A(r\,;u_2)
\right\|dr
\Big)
\Big]_{s=a}^{s=t}\notag\\[1mm]
&= 
h_a(t) 
+ 
h_a(t) 
\Big\{
-1 + \exp 
\Big(\int_a^t \left\|
A(r\,;u_2)
\right\|dr
\Big)
\Big\}\notag\\[1mm]
&=h_a(t) 
\exp 
\Big(\int_a^t \left\|
A(r\,;u_2)
\right\|dr
\Big),\quad t \in (a,b).
\end{align}
Since the functions 
$
\left\|A(t\,;u_1)- A(t\,;u_2)\right\|
\cdot
\| \wt{J}^{\,(x_1)}_{u_1}(t)\|
$ 
and $\|A(t\,;u_2)\|$ are well-defined on $[0,b)$ and are integrable on $[0,b)$, 
it is clear that 
\begin{equation}\label{2017_04_02_lem1_9}
\varphi (t) 
\le 
h_a(t) 
\exp 
\Big(\int_a^t \left\|
A(r\,;u_2)
\right\|dr
\Big)
\le 
h_0(t) 
\exp 
\Big(\int_0^t \left\|
A(r\,;u_2)
\right\|dr
\Big) 
\end{equation}
for all $t \in (a,b)$. 
Since the function 
$
t \lra h_0(t) 
\exp 
\big(\int_0^t \left\|
A(r\,;u_2)
\right\|dr
\big)
$ 
is increasing on $[0,\pi]$, 
Eq.\,\eqref{2017_04_02_lem1_9} has the form
\begin{equation}\label{2017_04_02_lem1_10}
\varphi (t) 
\le 
h_0(\pi) 
\exp 
\Big(\int_0^\pi \left\|
A(r\,;u_2)
\right\|dr
\Big)
\end{equation}
for all $t \in (a,b)$. 
Since Eq.\,\eqref{2017_04_02_lem1_10} 
still holds for some $t_0 \in [0,\pi]$ with 
$\varphi (t_0)=0$, 
we get 
\begin{equation}\label{2017_04_02_lem1_11}
\varphi (t) 
\le 
h_0(\pi) 
\exp 
\Big(\int_0^\pi \left\|
A(r\,;u_2)
\right\|dr
\Big) 
\end{equation}
for all $t \in [0,\pi]$. Set 
$
c_2(n,p_1)
:= \max
\{
\| 
\wt{J}^{\,(x_1)}_{u_1} (t)
\|\,|\, 
u_1 \in \Sph_{p_1}^{n-1}, \, 
x_1 \in \Sph_{u_1}^{n-2}, \, t \in [0,\pi]
\}$. 
Applying 
Lemma \ref{2017_05_04_lemma1}
to Eq.\,\eqref{2017_04_02_lem1_11}, 
we have
\begin{align}\label{2017_04_02_lem1_13}
\varphi (t) 
&\le
h_0(\pi) \exp 
\Big(\int_0^\pi \left\|
A(r\,;u_2)
\right\|dr
\Big)\notag
\\[1mm]
&=
\int_0^\pi 
\|
A(r\,;u_1)- A(r\,;u_2)
\|
\cdot
\|
\wt{J}^{\,(x_1)}_{u_1}(r)
\|dr
\cdot \exp 
\Big(\int_0^\pi \left\|
A(r\,;u_2)
\right\|dr
\Big) 
\notag\\[1mm]
&\le
2(n-1)\cdot c_2(n, p_1) \cdot\exp (\pi c_1(n,p_1))
\int_0^\pi \lambda (r)dr
\cdot \exp 
\Big(
\int_0^\pi 2(n-1) \lambda (r)dr
\Big) 
\notag\\[1mm]
&= c(n, M_1) \int_0^\pi \lambda (r)dr
\cdot \exp 
\Big(
2(n-1) \int_0^\pi \lambda (r)dr
\Big).
\end{align}
for all $t \in [0,\pi]$ where 
$
c(n, M_1):= 2(n-1)\cdot c_2(n, p_1) \cdot\exp (\pi c_1(n,p_1))$. 
Let $\ve_n(M_1) >0$ be the unique solution of the following equation 
\begin{equation}\label{2017_05_06_lem2_1}
\frac{c(n, M_1)\cdot x \exp (2(n-1)x)}{c_3(n,p_1)} = \frac{1}{2}\delta_n 
\end{equation}
for all $x \in [0,\infty)$ where 
\[
c_3(n, p_1)
:=\min
\Big\{
\Big\|
\frac{D J^{\,(x_1)}_{u_1}}{dt} (\pi)
\Big\|\ \Bigl| \  
u_1 \in \Sph_{p_1}^{n-1}, \ 
x_1 \in \Sph_{u_1}^{n-2}
\Big\}.
\]
We now assume that 
\[
\int_0^\pi\lambda (s)ds < \ve_n (M_1).
\]
Eq.\,\eqref{2017_04_02_lem1_13} then 
yields 
\begin{equation}\label{2017_05_06_lem2_2}
\varphi (\pi) < c(n, M_1)\cdot \ve_n (M_1) \cdot 
\exp (2(n-1)\ve_n (M_1)) = \frac{c_3(n, M_1)}{2}\delta_n.
\end{equation}
Combining Eqs.\,\eqref{2017_04_02_lem1_5} and 
\eqref{2017_05_06_lem2_2} shows 
\begin{align}\label{2017_04_02_lem1_14}
\frac{
\Big\|
d \sigma_{v_1} 
\Big(
\frac{DJ^{\,(x_1)}_{u_1}}{dt}(\pi)
\Big) 
-I^{(u_1)}_{q_1} 
\Big(
\frac{DJ^{\,(x_1)}_{u_1}}{dt}(\pi)
\Big) 
\Big\|
}
{
\Big\|
\frac{DJ^{\,(x_1)}_{u_1}}{dt}(\pi)
\Big\|
}
&\le
\frac{1}{c_3(n,p_1)}\ 
\sqrt{
\sum_{i=1}^{n-1}
\big(
\dot{f}^{\,(2)}_i (\pi)- \dot{f}^{\,(1)}_i (\pi)
\big)^2
}\notag\\[3mm]
&\le
\frac{\varphi (\pi)}
{c_3(n,p_1)} < \frac{1}{2}\delta_n.
\end{align}
From the arbitrariness of $x_1$, 
Eq.\,\eqref{2017_04_02_lem1_14} implies  
\[
\big\|
d \sigma_{v_1} - I^{(u_1)}_{q_1} 
\big\|
=\sup_{
\substack{
w \,\in \,T_{v_1}(\Sph^{n-1}_{q_1}) \\[0.5mm] w\,\not=\,0
}
} 
\frac{\big\|
d \sigma_{v_1}(w) - I^{(u_1)}_{q_1} (w) 
\big\|}{\|w\|}
\le \frac{1}{2}\delta_n,
\]
and hence $d \sigma_{v_1}$ and $I^{(u_1)}_{q_1}$ 
are $\delta_n/2$-close. 
$\qedd$
\end{proof}

\section{
Proof of Theorem \ref{2017_04_14_thm}
}\label{sec3}

The purpose of this section is to show Theorem \ref{2017_04_14_thm}. The idea of the proof is to 
construct a family of smooth immersions 
which approximates a bi-Lipschitz homeomorphism, defined by Eq.\,\eqref{2018_12_14_Bi_Lip}, between 
closed Riemannian manifolds admitting a point 
with a single cut point under the assumption 
of Theorem \ref{2017_04_14_thm}. 
Throughout the section, for each $k=1,2$ 
let $M_k$ be a closed manifold of 
dimension $n\ge2$ admitting a point $p_k \in M_k$ 
such that $\Cut (p_k)=\{q_k\}$. Moreover we assume 
$d_{M_k}(p_k,q_k)= \pi$. Furthermore 
we will make the following assumption:
\[
\int_0^\pi \lambda (t) dt < \ve_n(M_1)
\]
where $\lambda (t)$ is the function defined 
by Eq.\,\eqref{2017_05_14_fn} and 
$\ve_n(M_1) >0$ is the unique solution 
of Eq.\,\eqref{2017_05_06_lem2_1}.\par 
Choose a linear isometry $I_{p_1}: T_{p_1}M_1\lra T_{p_2}M_2$, and we define the bi-Lipschitz homeomorphism 
$F$ from $M_1$ onto $M_2$ by 
\begin{equation}\label{2018_12_14_Bi_Lip}
F(\exp_{p_1}tu_1):=\exp_{p_2}(t I_{p_1}(u_1))
\end{equation}
for all $(t, u_1) \in [0,\pi] \times \Sph^{n-1}_{p_1}$. 
Since $\Cut (p_1)=\{q_1\}$, 
$F$ is not differentiable only at $q_1$, and $F|_{M_1\setminus \{q_1\}}$ is diffeomorphism. 
Let $\B_\pi (o_{q_k}):=\{x \in T_{q_k}M_k\,|\, \|x\|<\pi\}$ ($k=1,2$) where $o_{q_k}$ is the origin of $T_{q_k}M_k$. 
We define the map $\wt{F}: \B_\pi(o_{q_1})\lra \B_\pi(o_{q_2})$ by 
\[
\wt{F}:= \exp_{q_2}^{-1}\circ F \circ \exp_{q_1}.
\]
We then see, by the very same argument as \cite[Section 3.3]{KT}, that 
\[
\wt{F} (x)= 
\begin{cases}
\ \|x\| \sigma \Big(\dfrac{x}{\|x\|}\Big) \ &\text{for all} \ 
x \in \B_\pi(o_{q_1}) \setminus \{o_{q_1} \},\\[1mm]
\ o_{q_2}  \ &\text{for} \ x= o_{q_1}
\end{cases}
\]
where $\sigma: \Sph^{n-1}_{q_1}\lra \Sph^{n-1}_{q_2}$ 
is the diffeomorphism defined by
Eq.\,\eqref{2017_04_18_Sigma}. Note that $\wt{F}$ is a bi-Lipschitz homeomorphism (see \cite[Lemma 3.13]{KT} 
for more details). 

\begin{lemma}\label{2017_05_10_lem1}
For any $x \in \B_\pi(o_{q_1}) \setminus \{o_{q_1}\}$ and 
any $X \in T_x \B_\pi (o_{q_1})$ with $\|X\|=1$, 
\begin{equation}\label{2017_05_09_lem1_7}
1 -\frac{1}{2}\delta_n
\le 
\| d\wt{F}_x (X)\|
\le 1 +\frac{1}{2}\delta_n
\end{equation}
holds where $\delta_n$ is the positive 
constant  defined by Eq.\,\eqref{2017_04_16_sec2.2_new2}. 
\end{lemma}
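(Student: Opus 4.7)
The plan is to compute $d\wt{F}_x$ explicitly in polar coordinates on $\B_\pi(o_{q_1})\setminus\{o_{q_1}\}$ and then apply the Key Lemma to control the tangential part. First I would set $r:=\|x\|$ and $v:=x/r\in \Sph^{n-1}_{q_1}$, and decompose an arbitrary unit $X\in T_x\B_\pi(o_{q_1})$ orthogonally as $X=\alpha v+W$ with $W\in T_v\Sph^{n-1}_{q_1}$; so $\alpha^2+\|W\|^2=1$. Choosing a curve $\gamma(s)$ in $\B_\pi(o_{q_1})$ with $\gamma(0)=x$ and $\dot\gamma(0)=X$ and writing $\gamma(s)=\|\gamma(s)\|\cdot(\gamma(s)/\|\gamma(s)\|)$, a direct chain-rule computation from $\wt{F}(y)=\|y\|\,\sigma(y/\|y\|)$ yields
\begin{equation*}
d\wt{F}_x(X)=\alpha\,\sigma(v)+d\sigma_v(W),
\end{equation*}
where crucially the factor $r$ disappears because the derivative of $\gamma/\|\gamma\|$ contributes a $1/r$ that cancels the outer $r$.

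Next I would exploit orthogonality on the target sphere. Since $\sigma(v)\in\Sph^{n-1}_{q_2}$ and $d\sigma_v(W)\in T_{\sigma(v)}\Sph^{n-1}_{q_2}$, the two summands are orthogonal in $T_{q_2}M_2$, so
\begin{equation*}
\|d\wt{F}_x(X)\|^2=\alpha^2+\|d\sigma_v(W)\|^2.
\end{equation*}
At this point the Key Lemma enters: writing $v=\sigma^{p_1}_{q_1}(u_1)$ for the unique $u_1\in\Sph^{n-1}_{p_1}$ and using that $I^{(u_1)}_{q_1}$ is a linear isometry, the triangle inequality applied to $\|d\sigma_v(W)-I^{(u_1)}_{q_1}(W)\|\le (\delta_n/2)\|W\|$ gives the two-sided estimate
\begin{equation*}
(1-\tfrac{1}{2}\delta_n)\,\|W\|\le \|d\sigma_v(W)\|\le (1+\tfrac{1}{2}\delta_n)\,\|W\|,
\end{equation*}
which is legitimate because $\delta_n<1$ for every $n\ge 2$.

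Finally I would combine these with $\alpha^2+\|W\|^2=1$. Squaring the sphere bound and inserting into the orthogonal decomposition of $\|d\wt F_x(X)\|^2$ gives
\begin{equation*}
\alpha^2+(1-\tfrac{1}{2}\delta_n)^2\|W\|^2\le \|d\wt{F}_x(X)\|^2\le \alpha^2+(1+\tfrac{1}{2}\delta_n)^2\|W\|^2,
\end{equation*}
and since $(1-\tfrac12\delta_n)^2\le 1\le(1+\tfrac12\delta_n)^2$, the extreme coefficients can be factored out across both $\alpha^2$ and $\|W\|^2$, yielding $(1-\tfrac12\delta_n)^2\le \|d\wt F_x(X)\|^2\le (1+\tfrac12\delta_n)^2$. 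Taking square roots completes the proof of \eqref{2017_05_09_lem1_7}. There is no real obstacle here; the only delicate point is verifying that the Key Lemma's $\delta_n/2$-closeness, which is stated on the sphere-tangent space $T_{v_1}\Sph^{n-1}_{q_1}$, is exactly what is needed for the tangential component $W$, and that the proof goes through uniformly in $r=\|x\|\in(0,\pi)$ because the $r$-dependence has dropped out of $d\wt F_x$.
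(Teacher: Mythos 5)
Your proof is correct and follows essentially the same route as the paper: orthogonal decomposition $X=\alpha v+W$, the identity $\|d\wt F_x(X)\|^2=\alpha^2+\|d\sigma_v(W)\|^2$, the Key Lemma plus triangle inequality to sandwich $\|d\sigma_v(W)\|$, and then combining with $\alpha^2+\|W\|^2=1$. The only cosmetic difference is that you rederive the formula $d\wt F_x(\alpha v+W)=\alpha\,\sigma(v)+d\sigma_v(W)$ by a direct chain-rule computation, whereas the paper simply cites \cite[Lemma 3.7]{KT} for that fact.
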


\begin{proof}
Fix $x \in \B_\pi(o_{q_1}) \setminus \{o_{q_1}\}$. 
Then there is $v \in \Sph^{n-1}_{q_1}$ and $\ell >0$ such that $x= \ell v$. 
We then see, by the proof of \cite[Lemma 3.7]{KT}, 
that \begin{equation}\label{2017_05_09_lem1_2}
d\wt{F}_{x} (av) = d\wt{F}_{\ell v} (av)= a \sigma (v)
\end{equation}
for all $a \in \R$, and 
\begin{equation}\label{2017_05_09_lem1_3}
d\wt{F}_{x} (w) 
= d\wt{F}_{\ell v} (w)
= d\sigma_{v} (w)
\end{equation}
for all 
$w \in \Sph^{n-2}_{v}
:=\{w \in T_{v} (\Sph^{n-1}_{q_1})\,|\, \|w\|=1\}
$. 
Since 
\[
\|d\sigma_{v} (w)\|
= 
\|
I_{q_1}^{(\sigma^{q_1}_{p_1}(v))} (w)
+ 
d\sigma_{v} (w)-I_{q_1}^{(\sigma^{q_1}_{p_1}(v))} (w)
\|,
\]
combining 
the triangle inequality and Lemma \ref{2017_04_16_Key_LEMMA} gives   
\begin{equation}\label{2017_05_09_lem1_4}
1-\frac{1}{2}\delta_n
\le 
\left\|
d \sigma_{v} 
(w)
\right\|
\le 
1+\frac{1}{2}\delta_n
\end{equation}
for all $w \in \Sph^{n-2}_{v}$ 
where 
$I_{q_1}^{(\sigma^{q_1}_{p_1}(v))}:
T_{q_1}M_1\lra T_{q_2}M_2$ is  
the linear isometry defined by 
Eq.\,\eqref{2017_05_10_Isom}, 
and 
$\sigma^{q_1}_{p_1}$ 
is the inverse of the diffeomorphism 
$\sigma^{p_1}_{q_1}:\Sph_{p_1}^{n-1}\lra \Sph_{q_1}^{n-1}$ 
defined by Eq.\,\eqref{2017_05_14_diffeom}. 
Fix $X \in T_{x}(\B_\pi(o_{q_1}))$ with $\|X\|=1$. 
Then there are $w_0 \in \Sph^{n-2}_v$ 
and $\alpha, \beta \in \R$ such that 
$X= \alpha v + \beta w_0$. 
Eqs.\,\eqref{2017_05_09_lem1_2} and \eqref{2017_05_09_lem1_3} imply 
\begin{equation}\label{2017_05_09_lem1_5}
\|
d\wt{F}_{x} (X)
\|^2
= \alpha^2 + \beta^2\|d\sigma_v(w_0)\|^2.
\end{equation}
Since $\alpha^2 + \beta^2 =1$ and $\delta_n \in (0,1)$, 
we see, by Eq.\,\eqref{2017_05_09_lem1_4}, that 
\begin{equation}\label{2018_03_18_lem1_6}
\Big(1-\frac{\delta_n}{2}\Big)^2  
\le 
\alpha^2 + \beta^2
\Big(
1-\frac{\delta_n}{2}
\Big)^2
\le
\alpha^2 + \beta^2\|d\sigma_v(w_0)\|^2
\end{equation}
and that 
\begin{equation}\label{2018_12_14_lem1_new1}
\Big(1+\frac{\delta_n}{2}\Big)^2  
\ge 
\alpha^2 + \beta^2
\Big(
1+\frac{\delta_n}{2}
\Big)^2
\ge
\alpha^2 + \beta^2\|d\sigma_v(w_0)\|^2.
\end{equation}
Substituting Eq.\,\eqref{2017_05_09_lem1_5} 
for Eqs.\,\eqref{2018_03_18_lem1_6} 
and \eqref{2018_12_14_lem1_new1}, 
we get the 
assertion in this lemma.
$\qedd$
\end{proof}

\begin{lemma}\label{2017_05_09_lem1}
For any $y, z \in \B_\pi(o_{q_1})$, 
\begin{equation}\label{2017_05_09_lem1_new1_1}
\Big(
1 -\frac{1}{2}\delta_n 
\Big)
\|y-z\|
\le \|\wt{F}(y)- \wt{F}(z)\| 
\le
\Big(
1 + \frac{1}{2}\delta_n 
\Big)\|y-z\|
\end{equation}
holds. In particular
\begin{equation}\label{2017_05_09_lem1_1}
\bL(\wt{F})\le 1+\delta_n
\end{equation}
holds where $\bL(\wt{F})$ denotes 
the bi-Lipschitz constant of $\wt{F}$ defined by 
\[
\bL(\wt{F})
:=
\inf
\{
L \ |\,L^{-1}\|y-z\| 
\le \|\wt{F} (y)-\wt{F}(z)\|
\le L\,\|y-z\| \ {\rm for \ all} \ y,z \in \B_\pi(o_{q_1})
\}. 
\]
\end{lemma}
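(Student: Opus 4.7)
The plan is to upgrade the pointwise differential bound of Lemma \ref{2017_05_10_lem1} to a global bi-Lipschitz estimate by integrating the derivative along straight line segments, exploiting the convexity of the ball $\B_\pi(o_{q_1})$. For the upper inequality in Eq.\,\eqref{2017_05_09_lem1_new1_1}, fix $y, z \in \B_\pi(o_{q_1})$ and set $\gamma(t) := (1-t)y + tz$ for $t \in [0,1]$. Whenever the segment $\gamma([0,1])$ avoids $o_{q_1}$, the composition $\wt{F} \circ \gamma$ is $C^1$ on $[0,1]$, since $F|_{M_1\setminus\{q_1\}}$ is a diffeomorphism, and Lemma \ref{2017_05_10_lem1} yields
\[
\|\wt{F}(y) - \wt{F}(z)\| \le \int_0^1 \|d\wt{F}_{\gamma(t)}(\dot\gamma(t))\|\,dt \le \Big(1 + \frac{\delta_n}{2}\Big)\|y - z\|.
\]

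For the lower inequality, observe that Lemma \ref{2017_05_10_lem1} forces $d\wt{F}_x$ to be a linear isomorphism at every $x \neq o_{q_1}$ with inverse operator norm at most $(1-\delta_n/2)^{-1}$. Consequently, $\wt{F}^{-1}:\B_\pi(o_{q_2})\to\B_\pi(o_{q_1})$ is smooth off $o_{q_2}$ and its differential has operator norm at most $(1-\delta_n/2)^{-1}$ everywhere off the origin. Running exactly the same straight-segment integration, but now along the segment from $\wt{F}(y)$ to $\wt{F}(z)$ in the convex ball $\B_\pi(o_{q_2})$, produces
\[
\|y - z\| = \|\wt{F}^{-1}(\wt{F}(y)) - \wt{F}^{-1}(\wt{F}(z))\| \le \Big(1 - \frac{\delta_n}{2}\Big)^{-1}\|\wt{F}(y) - \wt{F}(z)\|,
\]
which rearranges to the claimed lower bound.

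The main obstacle is the lack of differentiability of $\wt{F}$ at $o_{q_1}$ (and of $\wt{F}^{-1}$ at $o_{q_2}$), since the segment $\ol{yz}$ may pass through the origin. This exceptional case occurs precisely when $y$ and $z$ are antipodal on a line through $o_{q_1}$; then there is a unique $t_0 \in (0,1)$ with $\gamma(t_0) = o_{q_1}$, and we apply the integrated bound separately on $\gamma([0,t_0-\eta])$ and $\gamma([t_0+\eta,1])$, letting $\eta \downarrow 0$ and invoking the continuity of $\wt{F}$ together with $\wt{F}(o_{q_1}) = o_{q_2}$. In this colinear case the identity $\|y-z\| = \|y\| + \|z\|$ preserves the constants, and the same argument applies to $\wt{F}^{-1}$. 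Finally, Eq.\,\eqref{2017_05_09_lem1_1} follows from Eq.\,\eqref{2017_05_09_lem1_new1_1} by taking $L := (1-\delta_n/2)^{-1}$ in the definition of $\bL(\wt{F})$ (noting that $(1-\delta_n/2)^{-1} \ge 1 + \delta_n/2$) and checking the elementary inequality $(1-\delta_n/2)^{-1} \le 1 + \delta_n$, which holds since $\delta_n \in (0,1)$ for every $n \ge 2$.
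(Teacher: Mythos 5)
Your proposal is correct and takes essentially the same approach as the paper: both integrate the pointwise differential bounds of Lemma~\ref{2017_05_10_lem1} along straight segments in the convex balls, using $\wt{F}^{-1}$ for the lower bound and concluding $\bL(\wt{F})\le(1-\delta_n/2)^{-1}\le 1+\delta_n$ since $\delta_n\in(0,1)$. The only cosmetic difference is that the paper dispenses with the exceptional segment through the origin by appealing directly to $\wt{F}^{-1}$ being Lipschitz (hence absolutely continuous, so the fundamental theorem of calculus applies across a single non-smooth point), whereas you split the segment and take a limit $\eta\downarrow 0$; both are valid treatments of the same measure-zero issue.
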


\begin{proof}
Fix $x \in \B_\pi(o_{q_1}) \setminus \{o_{q_1}\}$. 
Let $Y \in T_{\wt{F}(x)}(\B_\pi (o_{q_2}))$ with $Y\not=0$. 
Since $d\wt{F}_x$ is bijective, 
there is $X\in T_{x}(\B_\pi (o_{q_1}))$ with $X\not=0$ satisfying $d\wt{F}_x (X)=Y$. 
Since $\|d\wt{F}_x (X)\|=\|Y\|$, 
the left side one in the 
inequality \eqref{2017_05_09_lem1_7} 
gives $(1 -\delta_n/2)\|X\| 
\le 
\|d\wt{F}_x (X)\|=\|Y\|$, 
and hence 
\begin{equation}\label{2017_05_09_lem1_8}
\|X\| 
\le
\Big(
1 -\frac{1}{2}\delta_n 
\Big)^{-1}\|Y\|
\end{equation}
holds. Since $(d\wt{F}_x)^{-1}= (d\wt{F}^{-1})_{\wt{F}(x)}$, 
we have $(d\wt{F}^{-1})_{\wt{F}(x)} (Y) =X$. 
From Eq.\,\eqref{2017_05_09_lem1_8} we thus have 
\begin{equation}\label{2017_05_09_lem1_9}
\|(d\wt{F}^{-1})_{\wt{F}(x)} (Y)\| =\|X\| 
\le \Big(1 -\frac{1}{2}\delta_n\Big)^{-1}\|Y\|.
\end{equation}

We first prove the left side 
one in the assertion \eqref{2017_05_09_lem1_new1_1}. 
Fix $\tilde{y}, \tilde{z} \in \B_\pi(o_{q_2})$. 
We can assume $\tilde{y} \not= \tilde{z}$ in this aim. 
Let 
$\tilde{v}:= (\tilde{z}-\tilde{y})/\|\tilde{z}-\tilde{y}\|$ 
and $a:=\|\tilde{z}-\tilde{y}\|$. 
The geodesic segment 
$\wt{\gamma} :[0,a]\lra \B_\pi(o_{q_2})$ 
emanating from $\tilde{y}$ to $\tilde{z}$ is 
then given by 
$\wt{\gamma} (t):= \tilde{y}+ t \tilde{v}$. 
Since $\wt{F}^{-1}$ is 
Lipschitzian, and since $\dot{\wt{\gamma}}(t)=\tilde{v}$, we see, 
by Eq.\,\eqref{2017_05_09_lem1_9}, that 
\begin{align}\label{2017_05_09_lem1_10}
\|\wt{F}^{-1} (\tilde{z}) -\wt{F}^{-1} (\tilde{y})\| 
&= 
\Big\|
\int_0^a \frac{d (\wt{F}^{-1}\circ \wt{\gamma})}{dt}(t)
\,dt
\Big\|
=
\Big\| 
\int_0^a (d\wt{F}^{-1})_{\wt{\gamma}(t)}
(\tilde{v})\,dt
\Big\|\notag\\[1mm]
&\le 
\int_0^a 
\| 
(d\wt{F}^{-1})_{\wt{\gamma}(t)}
(\tilde{v}) 
\|dt \le 
\Big(1 -\frac{1}{2}\delta_n\Big)^{-1}\int_0^a 
\| 
\tilde{v}
\|dt\notag\\[1mm]
&=
\Big(1 -\frac{1}{2}\delta_n\Big)^{-1}\int_0^a\,dt
=
\Big(1 -\frac{1}{2}\delta_n\Big)^{-1}\|\tilde{z}-\tilde{y}\|.
\end{align}
Set $y:=\wt{F}^{-1}(\tilde{y})$ and 
$z:=\wt{F}^{-1}(\tilde{z})$. 
Eq.\,\eqref{2017_05_09_lem1_10} then 
shows the desired inequality. 
An analogous argument gives 
the right side one 
in the inequality \eqref{2017_05_09_lem1_new1_1}.\par  
We finally prove Eq.\,\eqref{2017_05_09_lem1_1}. 
Since $\delta_n \in (0,1)$, we have
\[
1-\frac{1}{2}\delta_n -\frac{1}{1+\delta_n} = \frac{\delta_n (1-\delta_n )}{2(1+\delta_n )} >0, 
\]
and hence 
$(1+\delta_n)^{-1}< (1- \delta_n/2)$. 
Since $1+ \delta_n/2 < 1+\delta_n$, it follows from Eq.\,\eqref{2017_05_09_lem1_new1_1} 
that 
\begin{equation}\label{2017_05_09_lem1_11}
(1+\delta_n)^{-1}\|y-z\|
\le \|\wt{F}(y)- \wt{F}(z)\| 
\le \left(1+\delta_n \right)\|y-z\|.
\end{equation}
We therefore obtain 
Eq.\,\eqref{2017_05_09_lem1_1} from 
Eq.\,\eqref{2017_05_09_lem1_11}.$\qedd$ 
\end{proof}

By applying the Nash embedding theorem \cite{N} to $M_2$, 
let $M_2$ be isometrically embedded into the Euclidean space $\R^m$ where $m \ge n+1$. 
$F$ then is a Lipschitz map from $M_1$ to 
$M_2 \subset \R^m$.\par 
For any $\ve>0$ sufficiently small 
let $\wt{F}_\ve$ be the standard convolution 
of $\wt{F}$, i.e., $\wt{F}_\ve(x):=\int_{\R^n} \wt{F}(y) \rho_\ve (x-y)dy$ 
where the mollifier $\rho_\ve$ 
near $o_{q_1}$, 
and we identify $T_{q_1}M_1$ with $\R^n$. 
Substituting 
Eq.\,\eqref{2017_04_16_sec2.2_new2} 
for $\delta_n$ in 
Eq.\,\eqref{2017_05_09_lem1_1}, 
we have 
\begin{equation}\label{2017_05_12_final_1}
\bL(\wt{F})^2 
\le 
1+ \Big\{ \frac{8}{\pi}(n-1)\Big\}^{-\frac{1}{2}}. 
\end{equation}
In virtue of 
Eq.\,\eqref{2017_05_12_final_1} 
we can apply the proof of \cite[Theorem 5.1]{K} 
to $\wt{F}_\ve$, and hence 
we see, by the proof, that $\wt{F}_\ve$ 
is an immersion from some open ball $\B_a (o_{q_1})\subset \B_\pi (o_{q_1})$ into $\B_\pi (o_{q_2})$. 
Let $B_a (q_1):=\exp_{q_1}\B_a (o_{q_1})$. 
Define the map $F_\ve^{(q_1)}$ from 
$B_a (q_1)$ into $M_2$ by 
$
F_\ve^{(q_1)}
:= \exp_{q_2} 
\circ
\wt{F}_\ve
\circ 
\{\exp_{q_1}^{-1}|_{B_a (q_1)}\}
$. 
From the definition of $\wt{F}$, 
we see that $F_\ve^{(q_1)}$ is a smooth approximation of $F$ on $B_a (q_1)$. 
It is clear that $F_\ve^{(q_1)}$ is 
an immersion on $B_a (q_1)$.\par  
Let $g:M_1\lra \R$ be 
a smooth function satisfying $0\le g \le1$ on $M_1$, $g\equiv 1$ on $\ol{B_r(q_1)}$, 
and $\supp g \subset B_R(q_1)$ where 
$0<r < R<a$. Define the map $F_\ve:M_1\lra \R^m$ 
by $F_\ve:= (1-g)F + g F_\ve^{(q_1)}$. 

\begin{lemma}\label{2017_05_14_lem}
For any $x_1 \in M_1$, $(dF_\ve)_x$ is injective 
for an $\ve>0$ sufficiently small.
\end{lemma}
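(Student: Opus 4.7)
The plan is to split $M_1$ into the three regions prescribed by the cutoff $g$ and verify injectivity of $(dF_\ve)_x$ on each separately, reducing everything to properties of the mollification $\wt{F}_\ve$.

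First, on $\ol{B_r(q_1)}$ we have $g\equiv 1$, so $F_\ve = F_\ve^{(q_1)}$. Since $\ol{B_r(q_1)}\subset B_a(q_1)$ and the paper has already established that $F_\ve^{(q_1)}$ is an immersion on $B_a(q_1)$, the map $(dF_\ve)_x$ is automatically injective on this closed ball. Symmetrically, on $M_1\setminus\supp g$ we have $g\equiv 0$, so $F_\ve=F$; since $q_1\in\supp g\subset B_R(q_1)$, the restriction $F|_{M_1\setminus\supp g}$ is smooth (indeed, a diffeomorphism onto its image in $\R^m$), so $(dF_\ve)_x=(dF)_x$ is injective there as well.

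The remaining, substantive region is the compact annular collar
\[
K:=\ol{B_R(q_1)}\setminus B_r(q_1),
\]
on which $g$ takes intermediate values. The key observation is that $q_1\notin K$, so $F$ is a genuine smooth immersion on a neighborhood of $K$. In the chart $\exp_{q_1}$ the set $\wt K:=\exp_{q_1}^{-1}(K)$ is a compact subset of $\B_\pi(o_{q_1})\setminus\{o_{q_1}\}$, and $\wt F$ is $C^\infty$ on a neighborhood of $\wt K$. By the standard convergence property of the convolution $\wt F_\ve = \wt F * \rho_\ve$, we have $\wt F_\ve\to\wt F$ together with all derivatives, uniformly on $\wt K$, once $\ve$ is smaller than the distance from $\wt K$ to $o_{q_1}$. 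Transferring this back through $\exp_{q_1}$ and $\exp_{q_2}$ yields $F_\ve^{(q_1)}\to F$ in the $C^1$-topology on $K$. Writing $F_\ve=F+g(F_\ve^{(q_1)}-F)$ and using that $g$ is a fixed smooth bounded function, we conclude $F_\ve\to F$ in $C^1(K,\R^m)$ as $\ve\to 0$.

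Finally we exploit compactness to get a uniform injectivity bound. Set
\[
c:=\inf\bigl\{\|(dF)_x(v)\|_{\R^m}\;\bigl|\;x\in K,\ v\in T_xM_1,\ \|v\|=1\bigr\}.
\]
By compactness of the unit tangent bundle over $K$ and smoothness of $F$ on a neighborhood of $K$, we have $c>0$. The $C^1$-convergence established above gives, for all sufficiently small $\ve$, the uniform estimate $\|(dF_\ve)_x-(dF)_x\|<c/2$ for every $x\in K$, whence $\|(dF_\ve)_x(v)\|\ge c/2>0$ for every unit $v\in T_xM_1$, so $(dF_\ve)_x$ is injective on $K$. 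Combining the three regions gives injectivity of $(dF_\ve)_x$ on all of $M_1$ for $\ve$ sufficiently small.

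The main obstacle is the $C^1$-convergence step: one must be careful that the convolution is defined via the identification $T_{q_1}M_1\cong\R^n$ used near $o_{q_1}$ only, so the convergence argument really lives in this chart, and one needs $\ve<\dist(\wt K,\partial\B_\pi(o_{q_1}))$ and $\ve<\dist(\wt K,o_{q_1})$ so that the support of $\rho_\ve(x-\cdot)$ stays inside the region where $\wt F$ is smooth for every $x\in\wt K$. Once this separation of $K$ from $q_1$ is observed, the rest is a routine compactness-plus-$C^1$-perturbation argument.
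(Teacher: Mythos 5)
Your proposal follows essentially the same route as the paper's own proof: decompose $M_1$ into $\ol{B_r(q_1)}$, the annulus $\ol{B_R(q_1)}\setminus B_r(q_1)$, and $M_1\setminus\supp g$; observe that $F_\ve=F_\ve^{(q_1)}$ (an immersion) and $F_\ve=F$ (a diffeomorphism away from $q_1$) on the two outer regions; and on the compact annulus use $C^1$-convergence of $\wt F_\ve\to\wt F$, transferred through the exponential charts and the identity $F_\ve-F=g(F_\ve^{(q_1)}-F)$, to get $F_\ve\to F$ in $C^1$ and hence injectivity of $(dF_\ve)_x$ for small $\ve$. Your explicit lower bound $c>0$ on $\|(dF)_x(v)\|$ over the unit tangent bundle of the annulus, and your remark that the mollification must stay inside the region where $\wt F$ is smooth, simply spell out the compactness step the paper leaves implicit.
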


\begin{proof}
From the definition of $F_\ve$ we see 
$F_\ve = F_\ve^{(q_1)}$ on $\ol{B_r(q_1)}$ and 
$F_\ve =F$ on $M_1\setminus \supp g$, 
and hence $F_\ve$ is a local diffeomorphism 
on $\ol{B_r(q_1)}\cup (M_1\setminus \supp g)$. 
Since $\wt{F}$ is smooth on $\B_\pi(o_{q_1})\setminus \{o_{q_1}\}$, 
the definition of the differential 
of a smooth map shows that 
$\wt{F}_\ve$ uniformly converges to $\wt{F}$ 
on $\ol{\B_R(o_{q_1})} \setminus \B_r(o_{q_1})$ 
in the $C^1$-topology by letting 
$\ve\downarrow 0$. 
Since 
$dF_\ve^{(q_1)}
= 
d\exp_{q_2}\circ d\wt{F}_\ve 
\circ d \exp_{q_1}^{-1}
$ 
and 
$
dF
= 
d\exp_{q_2}\circ d\wt{F} \circ d \exp_{q_1}^{-1}
$ where 
note that 
$F$ is differentiable on 
$M_1\setminus \{q_1\}$, 
we see, by the argument above, 
that 
$F_\ve^{(q_1)}$ uniformly converges to $F$ 
on $\ol{B_R(q_1)} \setminus B_r(q_1)$ 
in the $C^1$-topology 
by letting $\ve\downarrow 0$. 
Since
\[
F_\ve- F 
= g (F_\ve^{(q_1)} -F) 
= g (\exp_{q_2}\circ \wt{F}_\ve \circ \exp_{q_1}^{-1}
- \exp_{q_2}\circ\wt{F} \circ \exp_{q_1}^{-1})
\]
on $M_1$, and since  
\[
(dF_\ve)_x (v) -dF_x(v)
= dg_x(v)(F_\ve^{(q_1)}(x)-F(x))
+ g (x)\{(dF_\ve^{(q_1)})_x (v) - dF_x(v)\}
\]
for all $v \in T_xM_1$ ($x \in M_1 \setminus\{q_1\}$), 
the second argument above shows 
that $F_\ve$ uniformly converges to $F$ 
on $\ol{B_R(q_1)} \setminus B_r(q_1)$ 
in the $C^1$-topology 
by letting $\ve\downarrow 0$. 
Since $F$ is diffeomorphic on $M_1\setminus \{q_1\}$, the third argument above implies 
that for any $x\in \ol{B_R(q_1)} \setminus B_r(q_1)$, 
$(dF_\ve)_x$ 
is injective for an $\ve>0$ sufficiently small, 
and hence for any $x\in M_1$, 
$(dF_\ve)_x$ 
is too for such an $\ve>0$.
$\qedd$ 
\end{proof}

Since $M_2$ is isometrically embedded into $\R^m$, 
the tubular neighborhood theorem 
(cf. \cite{H}, \cite{L}) via 
the normal exponential map $\exp^\perp:TM_2^\perp\lra \R^m$ shows 
that there is a constant $\mu >0$ such that 
$\exp^\perp$ is a diffeomorphism from 
an open neighborhood 
$\cU_{\mu} (O(TM_2^\perp))$ of the zero 
section $O(TM_2^\perp)$ 
onto an $\cU_{\mu}(M_2)$ 
of $M_2$ in $\R^m$ 
where the two sets are given by 
$
\cU_{\mu} (O(TM_2^\perp))
:=
\{
X\in TM_2^\perp\,|\,\|X\|<\mu\}$ 
and 
$
\cU_{\mu}(M_2):= \exp^\perp 
[\cU_{\mu} (O(TM_2^\perp))]$. 
Since $\exp^\perp|_{\cU_{\mu} (O(TM_2^\perp))}$ 
is bijective, for any $y \in \cU_{\mu}(M_2)$ 
there is a unique point 
$(x, v) \in \cU_{\mu} (O(TM_2^\perp))$ 
such that $y= \exp^\perp (x,v)$. 
For such a pair $(y, (x,v))$ we thus have the smooth projection $\pi_{M_2}:\cU_{\mu}(M_2) \lra M_2$ given by 
$\pi_{M_2}(y) = \pi_{M_2}(\exp^\perp (x,v)):= x$. 
Since $M_1$ is compact, 
we see, by the definition of $F_\ve$ and 
the proof of Lemma \ref{2017_05_14_lem}, that 
$\lim_{\ve\downarrow 0}\|F_\ve (p)- F(p) \|=0$ 
for all $p \in M_1$, which implies 
$F_\ve (M_1) \subset \cU_{\mu}(M_2)$ 
for an $\ve>0$ sufficiently small. 
We can now define the smooth map $\psi_\ve: M_1\lra M_2$ 
by 
\[
\psi_\ve := \pi_{M_2} \circ F_\ve
\]
for an $\ve>0$ sufficiently small.

\begin{lemma}\label{2018_03_21_lem3.4}
$\psi_{\ve_0}$ is 
a smooth immersion for an $\ve_0>0$ sufficiently small.
\end{lemma}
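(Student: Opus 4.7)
The plan is to decompose $M_1$ into three regions on which $\psi_\ve$ can be analyzed separately, and to prove injectivity of $(d\psi_\ve)_x$ at every $x\in M_1$ for $\ve$ sufficiently small. Smoothness of $\psi_\ve$ is automatic, since $F_\ve$ is smooth by construction and $\pi_{M_2}$ is smooth on the tubular neighborhood $\cU_{\mu}(M_2)$; the real content is the immersion property.

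First I would dispose of the two ``easy'' regions. On $\ol{B_r(q_1)}$ we have $g\equiv 1$, so $F_\ve = F_\ve^{(q_1)}=\exp_{q_2}\circ\wt{F}_\ve\circ\exp_{q_1}^{-1}$, whose image lies in $M_2$. Hence $\pi_{M_2}\circ F_\ve = F_\ve$ there, and Lemma \ref{2017_05_14_lem} gives that $(d\psi_\ve)_x=(dF_\ve)_x$ is injective. On $M_1\setminus\supp g$ we have $F_\ve=F$, which is a diffeomorphism onto its image in $M_2$; so $\psi_\ve = F$ is already an immersion on this open set.

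The nontrivial region is the compact annulus $K:=\supp g \setminus B_r(q_1)\subset M_1\setminus\{q_1\}$. Fix $x\in K$; then $F(x)\in M_2$ and $(dF)_x$ takes values in $T_{F(x)}M_2$. Since $(d\pi_{M_2})_y$ is the orthogonal projection onto $T_yM_2$ for every $y\in M_2$, it acts as the identity on $T_{F(x)}M_2$, whence
\[
(d\pi_{M_2})_{F(x)}\circ (dF)_x=(dF)_x,
\]
which is injective because $F$ is a diffeomorphism on $M_1\setminus\{q_1\}$. From the proof of Lemma \ref{2017_05_14_lem} we already know that $F_\ve \to F$ uniformly on $K$ in the $C^1$-topology as $\ve\downarrow 0$; together with the smoothness of $\pi_{M_2}$ on $\cU_{\mu}(M_2)$, this yields that $(d\psi_\ve)_x=(d\pi_{M_2})_{F_\ve(x)}\circ (dF_\ve)_x$ converges to $(dF)_x$ uniformly in $x\in K$.

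Because injectivity of a linear map between finite-dimensional spaces is an open condition (equivalently, the minimal singular value depends continuously on the map) and $K$ is compact, this uniform convergence furnishes an $\ve_0>0$ such that $(d\psi_\ve)_x$ is injective for every $x\in K$ and every $\ve\in(0,\ve_0]$. Shrinking $\ve_0$ if necessary to accommodate the two easy regions above, and to guarantee $F_\ve(M_1)\subset \cU_{\mu}(M_2)$ so that $\psi_\ve$ is well-defined, we conclude that $d\psi_{\ve_0}$ is injective everywhere on $M_1$. The main obstacle is precisely the uniform control on $K$: although $F_\ve$ no longer maps into $M_2$ on the support of $\nabla g$ and so the projection $\pi_{M_2}$ is genuinely active, the $C^1$-closeness to $F$ established in the proof of Lemma \ref{2017_05_14_lem} is exactly what prevents $d\pi_{M_2}$ from killing any direction in the image of $dF_\ve$.
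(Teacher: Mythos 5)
Your proposal is correct and follows essentially the same route as the paper: the same three-region decomposition, the observation that $d\pi_{M_2}$ restricted to $T_{F(x)}M_2$ is the identity (equivalently, the paper's $\Ker(d\pi_{M_2})_{F(p)}\cap\Im dF_p=\{0\}$), and the $C^1$-convergence $F_\ve\to F$ on the compact annulus to propagate injectivity. The only cosmetic difference is that you close the argument via openness of injectivity (continuity of the minimal singular value on a compact set), whereas the paper invokes an analogous transversality-stability argument from \cite{Kondo2018}; these are interchangeable.
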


\begin{proof}
Fix a sufficiently small $\ve >0$ so that 
$\psi_\ve$ is defined. 
Since $\dim {\rm Im} (dF_\ve)_p = n$ ($p \in M_1$) by Lemma \ref{2017_05_14_lem}, 
we shall show below that for each $p \in M_1$, 
\begin{equation}\label{2018_03_21_2}
\ra ((d\pi_{M_2})|_{{\rm Im} (dF_\ve)_p}) =n. 
\end{equation}
As we have noted in the proof of Lemma \ref{2017_05_14_lem}, 
$F_\ve = F_\ve^{(q_1)}$ on $\ol{B_r(q_1)}$ and 
$F_\ve =F$ on $M_1\setminus \supp g$, 
and hence we have
$
\psi_\ve (\ol{B_r(q_1)}) 
= F_\ve (\ol{B_r(q_1)}) 
\subset M_2
$ and 
$
\psi_\ve (M_1\setminus \supp g) 
= 
F_\ve (M_1\setminus \supp g) 
\subset M_2$. 
So it is sufficient to show that 
Eq.\,\eqref{2018_03_21_2} 
holds for all $p \in \ol{B_R(q_1)}\setminus B_r(q_1)$:  We first see,  
by the definition of $\pi_{M_2}$, that 
\begin{equation}\label{2018_10_11_new1}
\Ker (d\pi_{M_2})_{F(p)}
\cap 
\Im dF_p =\{o_{F(p)}\}
\end{equation}
for all $p \in \ol{B_R(q_1)}\setminus B_r(q_1)$ where $o_{F (p)}$ denotes the origin of 
$T_{F (p)}\R^m$. As we have seen in the proof of Lemma \ref{2017_05_14_lem}, 
$F_\ve$ uniformly converges to $F$ 
on $\ol{B_R(q_1)} \setminus B_r(q_1)$ 
in the $C^1$-topology by letting 
$\ve\downarrow 0$. 
From Eq.\,\eqref{2018_10_11_new1} 
and 
the similar argument 
as in \cite[Section 5.2]{Kondo2018} we see that 
for an $\ve_0 \in (0, \ve]$ sufficiently small,   
\begin{equation}\label{2018_03_21_3}
\Ker (d\pi_{M_2})_{F_{\ve_0} (p)} 
\cap 
\Im (dF_{\ve_0})_p 
=\{o_{F_{\ve_0} (p)}\}
\end{equation}
for all $p \in \ol{B_R(q_1)}\setminus B_r(q_1)$ 
where $o_{F_{\ve_0} (p)}$ denotes the origin of 
$T_{F_{\ve_0} (p)}\R^m$. Eq.\,\eqref{2018_03_21_3} shows that 
Eq.\,\eqref{2018_03_21_2} 
holds for all $p \in \ol{B_R(q_1)}\setminus B_r(q_1)$ and an $\ve_0>0$ sufficiently small, 
which completes the proof.$\qedd$
\end{proof}

Finally we will show that $\psi_{\ve_0}$ is a global diffeomorphism from $M_1$ onto $M_2$ 
for an $\ve_0 >0$ sufficiently small: Fix ${\ve_0} >0$ sufficiently small so that 
$\psi_{\ve_0}$ is a smooth immersion. Since $\psi_{\ve_0} (M_1)\subset M_2$ is compact, 
and since $M_2$ is Hausdorff, 
$\psi_{\ve_0} (M_1)$ is closed in $M_2$. 
Since $\psi_{\ve_0}$ is a local homeomorphism 
on $M_1$ by Lemma \ref{2018_03_21_lem3.4}, 
$\psi_{\ve_0} (M_1)$ is open in $M_2$. 
$\psi_{\ve_0} (M_1)$ is now open and closed in $M_2$, and hence $\psi_{\ve_0} (M_1) =M_2$, i.e., $\psi_{\ve_0}$ 
is surjective. Since $\psi_{\ve_0}^{-1}(V)$ is closed in $M_1$ for all closed sets $V$ in $M_2$, 
the compactness of $M_i$ ($i=1,2$) shows that 
$\psi_{\ve_0}^{-1}(V)\subset M_1$ and $V\subset M_2$ are compact, which 
implies that $\psi_{\ve_0}$ is a proper map, in particular, is a covering map. Since $M_2$ is simply connected, 
$\psi_{\ve_0}$ is injective. 
Therefore, $\psi_{\ve_0}$ is a global diffeomorphism from 
$M_1$ onto $M_2$.$\qedd$

\end{document}